\journal{Journal of Computational and Applied Mathematics}
\newcommand{\Author}{W. Huang, L. Kamenski, and J. Lang}
\newcommand{\Title}{%
   Conditioning of~implicit Runge--Kutta integration for~finite element
   approximation of~linear diffusion equations on~anisotropic meshes}
\pgfplotsset{%
   compat=newest, grid=major, ticks=major, legend cell align=left,
   every axis/.append style={%
      width=0.53\textwidth, height=0.304\textheight,
      font=\small, legend columns=2,
      legend style={row sep=-0.5ex, at = {(0,1)}, anchor = north west},
      xtick={1.0e+02, 1.0e+03, 1.0e+04, 1.0e+05, 1.0e+06},
      ytick={1.0e+00, 1.0e+01, 1.0e+02, 1.0e+03, 1.0e+04, 1.0e+05, 1.0e+06, 1.0e+07},
   },
}
\newcommand{\MyLineWidth}{1pt}
\newcommand{\MyMarkSize}{2pt}
\newcommand{\RKmethod}{Radau~IA}
\newcommand{\RKshort}{RIA3}
\newcommand{\plotRadauTEuler}[9]{%
   % 1 = precond1, 2 = test case, 3 vs 4 = columns, 5 vs 6 = name, 7 vs 8 = marker, 9 = title
   \centering{}%
   \begin{tikzpicture}%
      \begin{loglogaxis}[
         ymin=1e-00, ymax=4.0e+08,
         #9 ]
         % ∆t = 1.0e-01
         % Radau
         \addplot[color=red, dashed, line width=\MyLineWidth,
            mark size=\MyMarkSize, mark=#7, mark options={solid} ]
            table [x index=2, y index=#3, col sep = space]
               {./#1-e1-#2.dat};
         \addlegendentry{$10^{-1}$#5}
         % Euler
         \addplot[color=red, dashed, line width=\MyLineWidth,
            mark size=\MyMarkSize, mark=#8, mark options={solid} ]
            table [x index=2, y index=#4, col sep = space]
            {./#1-e1-#2.dat};
         \addlegendentry{$10^{-1}$#6}
         % ∆t = 1.0e-03
         % Radau
         \addplot[color=blue, dotted, line width=\MyLineWidth,
            mark size=\MyMarkSize, mark=#7, mark options={solid} ]
            table [x index=2, y index=#3, col sep = space]
               {./#1-e3-#2.dat};
         \addlegendentry{$10^{-3}$#5}
         % Euler
         \addplot[color=blue, dotted, line width=\MyLineWidth,
            mark size=\MyMarkSize, mark=#8, mark options={solid} ]
            table [x index=2, y index=#4, col sep = space]
            {./#1-e3-#2.dat};
         \addlegendentry{$10^{-3}$#6}
         % ∆t = N^{-1/2}
         % Radau
         \addplot[color=black, solid, line width=\MyLineWidth,
            mark size=\MyMarkSize, mark=#7, mark options={solid} ]
            table [x index=2, y index=#3, col sep = space]
            {./#1-dx-#2.dat};
         \addlegendentry{$N^{-\frac{1}{2}}$#5}
         % Euler
         \addplot[color=black, solid, line width=\MyLineWidth,
            mark size=\MyMarkSize, mark=#8, mark options={solid} ]
            table [x index=2, y index=#4, col sep = space]
            {./#1-dx-#2.dat};
         \addlegendentry{$N^{-\frac{1}{2}}$#6}
         % order
         \addplot[patch, table/row sep=\\,
            colormap={bw}{gray(0cm)=(0); gray(1cm)=(0)}, fill=white,]
            table {1.0e+05 1.0e+02\\ 1.0e+06 1.0e+02\\ 1.0e+06 1.0e+03\\};
         \addplot[patch, table/row sep=\\,
            colormap={bw}{gray(0cm)=(0); gray(1cm)=(0)}, fill=white,]
            table {1.0e+05 1.0e+02\\ 1.0e+06 1.0e+02\\ 1.0e+06 3.3e+02\\};
      \end{loglogaxis}%
   \end{tikzpicture}%
}
\newcommand{\plotScalingE}[9]{%
   % 1 vs 2 = precond1, 3 = test case, 4 = column, 5 vs 6 = name, 7 vs 8 = marker, 9 = title
   \centering{}%
   \begin{tikzpicture}
      \begin{loglogaxis}[
         ymin=1e+00, ymax=4.0e+10,
         ytick={1.0e+01, 1.0e+02, 1.0e+03, 1.0e+04, 1.0e+05, 1.0e+06, 1.0e+07, 1.0e+08},
         #9 ]
         % ∆t = 1.0e-01
         % first file ----------------------------------------
         \addplot[color=red, dashed, line width=\MyLineWidth,
            mark size=\MyMarkSize, mark=#7, mark options={solid} ]
            table [x index=2, y index=#4, col sep = space]
            {./#1-e1-#3.dat};
         \addlegendentry{$10^{-1}$#5}
         % second file
         \addplot[color=red, dashed, line width=\MyLineWidth,
            mark size=\MyMarkSize, mark=#8, mark options={solid} ]
            table [x index=2, y index=#4, col sep = space]
            {./#2-e1-#3.dat};
         \addlegendentry{$10^{-1}$#6}
         % ∆t = 1.0e-03 ----------------------------------------
         % first file
         \addplot[color=blue, dotted, line width=\MyLineWidth,
            mark size=\MyMarkSize, mark=#7, mark options={solid} ]
            table [x index=2, y index=#4, col sep = space]
            {./#1-e3-#3.dat};
         \addlegendentry{$10^{-3}$#5}
         % second file
         \addplot[color=blue, dotted, line width=\MyLineWidth,
            mark size=\MyMarkSize, mark=#8, mark options={solid} ]
            table [x index=2, y index=#4, col sep = space]
            {./#2-e3-#3.dat};
         \addlegendentry{$10^{-3}$#6}
         % ∆t = N^{-1/2} ----------------------------------------
         % first file
         \addplot[color=black, solid, line width=\MyLineWidth,
            mark size=\MyMarkSize, mark=#7, mark options={solid} ]
            table [x index=2, y index=#4, col sep = space]
            {./#1-e5-#3.dat};
         \addlegendentry{$10^{-5}$#5}
         %% second file
         \addplot[color=black, solid, line width=\MyLineWidth,
            mark size=\MyMarkSize, mark=#8, mark options={solid} ]
            table [x index=2, y index=#4, col sep = space]
            {./#2-e5-#3.dat};
         \addlegendentry{$10^{-5}$#6}
         % order ---------------------------------------------
         \addplot[patch, table/row sep=\\,
            colormap={bw}{gray(0cm)=(0); gray(1cm)=(0)}, fill=white,]
            table {1.0e+05 1.0e+02\\ 1.0e+06 1.0e+02\\ 1.0e+06 1.0e+03\\};
      \end{loglogaxis}
   \end{tikzpicture}%
}
\newcommand{\A}{\mathbb{A}}
\newcommand{\D}{\mathbb{D}}
\newcommand{\Dt}{\Delta{} t}
\newcommand\M{\mathbb{M}}
\newcommand\R{\mathbb{R}}
\newcommand\Th{\mathcal{T}_h}
\newcommand{\hD}{h_{\D^{-1}}}
\newcommand{\hKD}{h_{K,\D^{-1}}}
\newcommand{\aKD}{a_{K,\D^{-1}}}
\newcommand{\bu}{\boldsymbol{u}}
\newcommand{\bv}{\boldsymbol{v}}
\newcommand{\bw}{\boldsymbol{w}}
\newcommand{\bx}{\boldsymbol{x}}
\newcommand{\dx}{\;\mathrm{d}\boldsymbol{x}}
\newcommand{\FDF}{{(F_K')}^{-1} \D_K {(F_K')}^{-T}}
\newcommand{\Abs}[1]{{\left\lvert#1\right\rvert}}
\newcommand{\abs}[1]{{\lvert#1\rvert}}
\newcommand{\norm}[1]{{\lVert#1\rVert}}
\newcommand{\EnonU}{\Psi_{E}}
\newcommand{\DnonU}{\Psi_{\D}}
\newcommand{\nonUinLog}{\psi_{\D}}
\newtheorem{lemma}{Lemma}[section]
\newtheorem{theorem}{Theorem}[section]
\newdefinition{example}{Example}[section]
\newdefinition{remark}{Remark}[section]
\begin{document}

%*** Title **********************************************************
\date{October 5, 2019}
\begin{frontmatter}

\title{\Title{}\tnoteref{t1}}
\tnotetext[t1]{%
      Supported in part by
      the DFG under grant KA\,3215/2-1
      and the Darmstadt Graduate Schools of Excellence
         \emph{Computational Engineering}
         and~\emph{Energy Science and Engineering}.%
}

\author[1]{Weizhang Huang}
\ead{whuang@ku.edu}
\address[1]{Department of Mathematics, The University of Kansas,
      Lawrence, KS~66045, USA}

\author[2]{Lennard Kamenski\corref{cor1}}
\ead{lkamenski@m4sim.de}
\address[2]{m4sim GmbH, 10117~Berlin, Germany}

\author[3]{Jens Lang}
\ead{lang@mathematik.tu-darmstadt.de}
\address[3]{Department of Mathematics, TU Darmstadt,
   64293~Darmstadt, Germany}

   \cortext[cor1]{Corresponding author.}

%--- Abstract --------------------------------------------------------
\begin{abstract}
The conditioning of implicit Runge--Kutta (RK) integration for linear finite element approximation of diffusion equations on general anisotropic meshes is investigated.
Bounds are established for the condition number of the resulting linear system with and without diagonal preconditioning for the implicit Euler (the simplest implicit RK method) and general implicit RK methods.
Two solution strategies are considered for the linear system resulting from general implicit RK integration: the simultaneous solution where the system is solved as a whole and a successive solution which follows the commonly used implementation of implicit RK methods to first transform the system into a number of smaller systems using the Jordan normal form of the RK matrix and then solve them successively.

For the simultaneous solution in case of a positive semidefinite symmetric part of the RK coefficient matrix and for the successive solution it is shown that the conditioning of an implicit RK method behaves like that of the implicit Euler method.
If the smallest eigenvalue of the symmetric part of the RK coefficient matrix is negative and the simultaneous solution strategy is used, an upper bound on the time step is given so that the system matrix is positive definite.

The obtained bounds for the condition number have explicit geometric interpretations and take the interplay between the diffusion matrix and the mesh geometry into full consideration.
They show that there are three mesh-dependent factors that can affect the conditioning: the number of elements, the mesh nonuniformity measured in the Euclidean metric, and the mesh nonuniformity with respect to the inverse of the diffusion matrix.
They also reveal that the preconditioning using the diagonal of the system matrix, the mass matrix, or the lumped mass matrix can effectively eliminate the effects of the mesh nonuniformity measured in the Euclidean metric.
Illustrative numerical examples are given.
\end{abstract}

\begin{keyword}
   finite element method, anisotropic mesh, condition number,
   parabolic equation, implicit Runge--Kutta method
\MSC[2010]
   65M60, % Finite elements, Rayleigh-Ritz and Galerkin methods, finite methods
   65M50, % Mesh generation and refinement
   65F35, % Matrix norms, conditioning, scaling
   65F15  % Eigenvalues, eigenvectors
\\%[0.6\baselineskip]%
\par\noindent\rule{\textwidth}{0.45pt}\\
\vspace{0.3\baselineskip}
{\small{This is a preprint of a contibution published by Elsevier in
\href{https://doi.org/10.1016/j.cam.2019.112497}{\emph{J.\ Comput.\ Appl.\ Math.} (2019) 112497}}.}%
\\[0.4\baselineskip]%
{\footnotesize{}%
   \begin{tabular}{@{}p{0.14\linewidth}@{}p{0.85\linewidth}@{}}
      \includegraphics[height=2.2\baselineskip, valign=t]{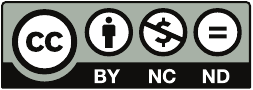}
   &
   \copyright~2019. Licensed under CC-BY-NC-ND~4.0 (\url{https://creativecommons.org/licenses/by-nc-nd/4.0}).
   \vspace{0.3\baselineskip}
   \newline{}
   The formal publication is available online at \url{https://doi.org/10.1016/j.cam.2019.112497}.
   \end{tabular}%
}%
\end{keyword}

\end{frontmatter}

%*** INTRODUCTION **************************************************************
\section{Introduction}\label{sec:introduction}

The nonuniformity of adaptive meshes has considerable effects on the conditioning of the discrete approximation of partial differential equations (PDEs) and their efficient solution.
To study these effects, we investigate the implicit Runge--Kutta (RK) integration for
the linear finite element (FE) approximation of linear diffusion equations on general
simplicial anisotropic meshes for the initial-boundary value problem (IBVP)
\begin{align}
   \begin{cases}
      \begin{alignedat}{3}
         & \partial_t u = \nabla \cdot \left(\D \nabla u \right),
            \qquad && \bx \in \Omega,
            \quad  && t > 0,
            \\
         & u(\bx, t) = 0,
            \qquad && \bx \in \partial \Omega,
            \quad  && t > 0,
            \\
         & u(\bx, 0) = u_0(\bx),
            \qquad && \bx \in \Omega,
      \end{alignedat}
   \end{cases}
   \label{eq:IBVP}
\end{align}
where $\Omega \subset \mathbb{R}^d$ ($d \ge 1$) is a bounded polygonal or polyhedral domain, $u_0$ is a given initial solution, and $\D$ is the diffusion matrix.
We assume that $\D = \D(\bx)$ is time independent and symmetric and uniformly positive definite on $\Omega$, i.e.,
\begin{equation}
   \exists \, d_{\min}, d_{\max} > 0 \colon\quad
      d_{\min} I \le \D(\bx) \le d_{\max} I,
      \quad \forall \bx \in \Omega,
   \label{eq:D:i}
\end{equation}
where the less-than-or-equal sign means that the difference between the two matrices is positive semidefinite.
We consider the Dirichlet boundary condition in this work
but the analysis is applicable to other boundary condition types without major modifications.

Much effort has been made in the past to understand the effects of mesh nonuniformity
on the conditioning of FE approximations.
For example, Fried~\cite{Fri73} obtains a bound on the condition number of the stiffness matrix for the linear FE approximation of the Laplace operator for general meshes.
For the Laplace operator on isotropic adaptive grids, Bank and Scott~\cite{BanSco89} show that the condition number of the diagonally scaled stiffness matrix is essentially the same as for a regular mesh.
Ainsworth, McLean, and Tran~\cite{AinMcLTra99} and Graham and McLean~\cite{GraMcL06} extend this result to the boundary element equations for locally quasi-uniform meshes and provide a bound in terms of patch volumes and aspect ratios.
Du et al.~\cite{DuWanZhu09} obtain a bound on the condition number of the stiffness matrix for a general diffusion operator on anisotropic meshes which reveals the relation between the condition number and some mesh quality measures.
For the FE approximations of parabolic problems, Zhu and Du~\cite{ZhuDu11,ZhuDu14} develop mesh dependent stability and condition number estimates for the explicit and implicit Euler methods.
In the case of a lumped mass matrix ($M_{lump}$), Shewchuk~\cite{She02} provides a bound
on the largest eigenvalue of $M_{lump}^{-1} A$ in terms of the maximum eigenvalues of local element matrices,
where $A$ is the stiffness matrix.
The results mentioned above allow anisotropic adaptive meshes but do not fully take into account the interplay between the mesh geometry and the diffusion matrix. (An exception is the bound by Shewchuk
which includes the effects of the diffusion coefficients.)
Moreover, the existing analysis employs mesh restrictions in form of mesh regularity assumptions, e.g.,\ local mesh uniformity, or parameters in final estimates that are related to mesh regularity, such as the maximum ratio of volumes of neighbouring elements or the maximum number of elements in a patch.

The objective of this work is to develop estimates on the conditioning of the resulting linear system that take the interplay between the mesh geometry and the diffusion matrix into full consideration, have explicit geometric interpretation, and make no prior assumptions on the mesh regularity.
This is a continuation of our previous effort to develop bounds for the condition number of the stiffness matrix for the linear FE equations of a general diffusion operator on arbitrary anisotropic meshes~\cite{KamHua14a,KamHuaXu14} and the largest permissible time steps for explicit RK schemes for both linear and high order FE approximations of the IBVP~\cref{eq:IBVP}~\cite{HuaKamLan15,HuaKamLan16}.
In particular, these bounds show~\cite{KamHuaXu14} that the condition number of the stiffness matrix depends on three factors: the factor depending on the number of mesh elements and corresponding to the condition number of the linear FE equations for the Laplace operator on a uniform mesh, the nonuniformity of the mesh viewed in the metric defined by the inverse diffusion matrix, $\D^{-1}$, and the mesh nonuniformity measured in the Euclidean metric.
Moreover, the Jacobi preconditioning, an optimal diagonal scaling for a symmetric positive definite sparse matrix~\cite[Corollary~7.6ff.]{Hig96}, can effectively eliminate the effects of mesh nonuniformity and reduce those of the mesh nonuniformity with respect to $\D^{-1}$~\cite{KamHuaXu14}.
Detailed characterizations of the condition number according to the mesh concentration distribution can be obtained using Green's functions~\cite{Fri73,KamHua14a}.

We first consider the implicit Euler method (the simplest implicit RK method) and establish bounds for the condition number of the corresponding system matrix with and without diagonal scaling.

For general implicit RK methods, we consider two strategies for solving the resulting system: the simultaneous solution and a successive solution.
We need to point out that for general implicit RK methods, the coefficient matrix is not necessarily symmetric
and thus a condition number in the standard definition will not provide much information for iterative solvers.
Motivated by the convergence estimates by Eisenstat et al.~\cite{EisElmSch83}
for the generalized minimal residual method (GMRES) for nonsymmetric systems,
we use here the ratio of the maximum singular value of the coefficient matrix and the minimum eigenvalue
of its symmetric part to measure the conditioning.

For the simultaneous solution, the system is solved as a whole.
Obtained estimates reveal that the conditioning of implicit RK methods with positive semidefinite symmetric part of the coefficient matrix behaves like that of the implicit Euler method (\cref{thm:irk:1} in \cref{sec:irk}).
If the smallest eigenvalue of the symmetric part of the RK coefficient matrix is negative, we provide an upper bound \cref{dt-0} on the possible time step so that the system matrix is positive definite.
This condition can be serious and lead to $\Delta t = \mathcal{O}(h^2)$.%

For the successive solution, which follows the commonly used implementation of implicit RK methods~\cite{Bic77,But76},
the system is first transformed into a number of smaller systems using the Jordan normal form of the RK matrix and then solved successively.
We show that the conditioning of the implicit RK integration behaves like that of the implicit Euler method and is determined by the conditioning of two types of matrices.
The first one is similar to the implicit Euler method and corresponds to the real eigenvalues of the RK matrix, while the second, twice as large, corresponds to the complex eigenvalues of the RK matrix (cf.~\cref{eq:irk-2-1} in~\cref{sec:irk}).

The paper is organized as follows.
We first introduce the FE formulation and its implicit RK integration (\cref{sec:setting}) and provide preliminary estimates for the extremal eigenvalues of the mass and stiffness matrices (\cref{sec:preliminary}).
The main results for the conditioning of the coefficient matrices are given in \cref{sec:euler,sec:irk}, followed by numerical examples (\cref{sec:numerics}) and conclusions (\cref{sec:conclusion}).

%--- SECTION 2 ----------------------------------------
\section{Linear FE approximation and implicit RK integration}\label{sec:setting}

Let $\{\Th\}$ be a given family of simplicial meshes for the domain $\Omega$ and $N$, $N_v$, and $N_{vi}$ the number of mesh elements, vertices, and interior vertices, respectively.
For convenience, we assume that the vertices are ordered such that the first $N_{vi}$ vertices are the interior ones.
The element patch associated with the $j$-th vertex is denoted by $\omega_j$, $K$ denotes a given mesh element and $\hat{K}$ is the reference element which is assumed to have been taken as a unitary equilateral simplex.
Element and patch volumes are denoted by $\Abs{K}$ and $\Abs{\omega_j} = \sum_{K \in \omega_j} \Abs{K}$.
For each mesh element $K\in \Th$, we denote the invertible affine mapping from $\hat{K}$ to $K$ and its Jacobian matrix by $F_K$ and $F_K'$, respectively.
Note that $F_K'$ is a constant matrix and $\det(F_K') = \Abs{K}$.

Let $V^h \subset H_0^1(\Omega)$ be the linear FE space associated with $\Th$.
The piecewise linear FE solution $u_h(t) \in V^h$, $t > 0$ for \cref{eq:IBVP} is defined by
\begin{equation}
   \int_\Omega \frac{\partial u_h}{\partial t} v_h \dx
   = - \int_\Omega {(\nabla v_h)}^T \D \nabla u_h \dx ,
      \qquad \forall v_h \in V^h, \quad t > 0,
         \label{eq:FEM:i}
\end{equation}
subject to the initial condition
\[
   \int_\Omega u_h(\bx, 0) v_h \dx
      = \int_\Omega u^0(\bx) v_h \dx ,
      \qquad \forall v_h \in V^h .
\]
It can be expressed as
\[
   u_h(\bx, t) = \sum_{j=1}^{N_{vi}} u_j(t) \phi_j (\bx),
\]
where $\phi_j $ is the linear basis function associated with the $j$-th vertex.
Inserting this into \cref{eq:FEM:i} and taking $v_h = \phi_k$, $k = 1, \dotsc, N_{vi}$, successively yields the system
\begin{equation}
   M \frac{d \bu}{d t} = - A \bu,
   \label{eq:fem}
\end{equation}
where $\bu = {(u_1,\dotsc, u_{N_{vi}})}^T$ and $M$ and $A$ are the mass and stiffness matrices with
\begin{equation}
   M_{kj} = \int_\Omega \phi_k \phi_j \dx
   \quad \text{and} \quad
   A_{kj} = \int_\Omega \nabla \phi_k \cdot \D \nabla \phi_j \dx ,
   \qquad k, j = 1, \dotsc, N_{vi}.
   \label{eq:AM:i}
\end{equation}

For the time integration of \cref{eq:fem} we consider a general implicit $s$-stage RK method with the Butcher tableau
%--- Butcher tableau  ---
\begin{center}%
\begin{tabular}{c|cccc}
   $c_1$    & $\gamma_{11}$  & $\gamma_{12}$  & $\cdots$  & $\gamma_{1s}$\\
   $c_2$    & $\gamma_{21}$  & $\gamma_{22}$  & $\cdots$  & $\gamma_{2s}$\\
   $\vdots$ & $\vdots$       & $\vdots$       &           & $\vdots$\\
   $c_s$    & $\gamma_{s1}$  & $\gamma_{s2}$  & $\cdots$  & $\gamma_{ss}$\\
   \hline
            & $b_1$          & $b_2$          & $\cdots$  & $b_s$
\end{tabular}%
\end{center}
and assume that the eigenvalues of the RK matrix have nonnegative real parts.
This requirement is satisfied by most implicit RK methods (e.g.,\ see \cref{tab:t1}).
Applying the method to \cref{eq:fem} yields
\begin{align}
   & M \bv_k + \Dt A \sum_{j=1}^s \gamma_{kj} \bv_j
   = - A \bu^n,
   \quad k = 1, \dotsc, s,
   \label{eq:rk:1}
   \\
   & \bu^{n+1} = \bu^n + \Dt{}\, \sum_{k=1}^s b_k \bv_k ,
   \label{eq:rk:2}
\end{align}
where $\bu^n$ and $\bu^{n+1}$ are the approximations of $\bu(t_n)$ and $\bu(t_{n+1})$.
The major cost of finding $\bu^{n+1}$ in the above method is the solution of \cref{eq:rk:1} for $\bv_1, \dotsc, \bv_s$.
Using the Kronecker matrix product $\otimes$ (e.g., see~\cite{Lau05}), the $s\times s$ identity matrix $I_s$, and $\Gamma := {(\gamma_{kj})}_{k,j=1}^s$, the coefficient matrix of \cref{eq:rk:1} can be expressed as
\begin{equation}
 \A \equiv  I_s\otimes M + \Dt{}\, \Gamma \otimes A .
   \label{eq:rk:3}
\end{equation}

The simplest implicit RK method is the implicit Euler method with $s=1$, $\Gamma = 1$, $c_1 = 1$, and $b_1 = 1$
for which \cref{eq:rk:1,eq:rk:2} are reduced to
\begin{equation}
   (M + \Dt{}\, A) \bu^{n+1} = M \bu^n .
   \label{eq:fem:2}
\end{equation}
In \cref{sec:euler} we study the conditioning of the coefficient matrix $M + \Dt{}\, A$ related to the efficient iterative solution of \cref{eq:fem:2}.
The system \cref{eq:fem:2} can also be solved using a symmetric and positive definite preconditioner $P$, which leads to
\begin{equation}
   P^{-\frac{1}{2}} (M + \Dt{}\, A) P^{-\frac{1}{2}} \bw^{n+1}
   = P^{-\frac{1}{2}}MP^{-\frac{1}{2}} \bw^n,
   \qquad \bw^n = P^{\frac{1}{2}}\bu^n.
   \label{eq:fem:3}
\end{equation}
The efficient iterative solution of \cref{eq:fem:3} is related to the conditioning of $P^{-\frac{1}{2}} (M + \Dt{}\, A) P^{-\frac{1}{2}}$.
In this work, we consider as preconditioners the mass matrix $M$, its diagonal part $M_D$, the lumped mass matrix $M_{lump}$, and the diagonal part $M_D + \Dt{}\, A_D$ of $M + \Dt{}\, A$.
The choice $P = M$ is of theoretical importance but impractical since $M^{-\frac{1}{2}}$ is expensive to compute.
The other choices are simple and economic to implement.

The results for the Euler method will be used in \cref{sec:irk} to study the conditioning of \cref{eq:rk:3} and its diagonally preconditioned version for general implicit RK methods.

%--- SECTION 3 ----------------------------------------
\section{Preliminary estimates on the extreme eigenvalues
   of the mass and stiffness matrices}\label{sec:preliminary}

Hereafter, $C$ denotes a generic constant which may have different values at different appearances and may depend on the dimension, the choice of the reference element, and the reference basis linear functions but is independent of the mesh and the IBVP coefficients.
For notation simplicity, when using this generic constant $C$, we will sometime write $a \gtrsim b$ and $a \lesssim b$ meaning $a \ge C b$ and $a \le C b$, respectively.

\begin{lemma}[{\cite[proof of Theorem 3.1]{KamHuaXu14}}]\label{lem:M:1}
The mass matrix $M$ and its diagonal part $M_D$ are related by
\begin{equation}
   \frac{1}{2} M_D \leq M \le \left( 1 + \frac{d}{2} \right) M_D
   \quad \text{and} \quad
   M_{jj} = \frac{2 \Abs{\omega_j}}{(d+1)(d+2)} .
   \label{eq:M}
\end{equation}
\end{lemma}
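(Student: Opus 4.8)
The plan is to localize everything to a single mesh element, use the explicit form of the linear finite element mass matrix on a simplex, and then assemble. First I would record the element mass matrix: on the reference element $\hat{K}$ the products $\int_{\hat{K}} \hat\phi_i\hat\phi_j$ are computed once and for all, and pulling back through the affine map $F_K$ --- whose Jacobian is the constant $\det(F_K')=\Abs{K}$ --- gives, for the $d+1$ vertices $i,j$ of $K$,
\[
   \int_K \phi_i\phi_j\dx = \frac{\Abs{K}}{(d+1)(d+2)}\,(1+\delta_{ij}),
\]
so that the local mass matrix is $M_K=\frac{\Abs{K}}{(d+1)(d+2)}\bigl(I+ee^T\bigr)$ with $e=(1,\dots,1)^T\in\R^{d+1}$, and its diagonal part is $M_{K,D}=\frac{2\Abs{K}}{(d+1)(d+2)}\,I$. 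Summing $(M_K)_{jj}$ over the elements of the patch $\omega_j$ then immediately yields $M_{jj}=\sum_{K\in\omega_j}\frac{2\Abs{K}}{(d+1)(d+2)}=\frac{2\Abs{\omega_j}}{(d+1)(d+2)}$, which is the second assertion.

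For the two-sided bound I would first prove it element-wise. Since $ee^T$ is positive semidefinite with eigenvalues $d+1$ (simple) and $0$ (multiplicity $d$), we have $I\le I+ee^T\le(d+2)I$; multiplying by the positive scalar $\frac{\Abs{K}}{(d+1)(d+2)}$ and noting that $\tfrac12 M_{K,D}=\frac{\Abs{K}}{(d+1)(d+2)}I$ and $\bigl(1+\tfrac d2\bigr)M_{K,D}=\frac{(d+2)\Abs{K}}{(d+1)(d+2)}I$, this reads
\[
   \tfrac12\,M_{K,D}\le M_K\le\bigl(1+\tfrac d2\bigr)M_{K,D}.
\]
Then I would assemble: with $P_K$ the $0/1$ restriction matrix onto the interior vertices of $K$, one has $M=\sum_{K\in\Th}P_K^T M_K P_K$ and $M_D=\sum_{K\in\Th}P_K^T M_{K,D} P_K$, and the inequality above is preserved both under each congruence $X\mapsto P_K^T X P_K$ and under summation over $K$, which gives $\tfrac12 M_D\le M\le\bigl(1+\tfrac d2\bigr)M_D$.

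There is no genuine obstacle here: once the element mass matrix is written down the computation is entirely elementary. The only places that call for a little care are getting the reference-element integrals right together with the Jacobian factor $\Abs{K}=\det(F_K')$, and the assembly bookkeeping --- in particular, that passing from the full vertex set to the interior vertices costs nothing, because the local bounds survive the congruences $P_K^T(\cdot)P_K$ and their sum. (This localization/assembly scheme is the one underlying the cited reference.)
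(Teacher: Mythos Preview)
Your argument is correct. The paper does not give its own proof of this lemma but simply cites the reference; your localize-to-an-element, read off the spectrum of $I+ee^T$, then assemble argument is the standard one and is essentially what appears in the cited source.
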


\begin{lemma}[{\cite[Lemma 2.3]{HuaKamLan16}}]\label{lem:M:2}
The mass matrix $M$ and the lumped mass matrix $M_{lump}$ are related by
\begin{equation}
   \frac{1}{d+2} M_{lump} \le M \le \frac{d+2}{2} M_{lump} .
   \label{eq:M-2}
\end{equation}
\end{lemma}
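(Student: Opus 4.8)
The quickest route is to piggy-back on \cref{lem:M:1}. That lemma gives the explicit diagonal entries $M_{jj} = \frac{2\Abs{\omega_j}}{(d+1)(d+2)}$, and for linear elements the lumped mass matrix is the diagonal matrix of row sums of $M$, so $M_{lump,jj} = \int_\Omega \phi_j \dx = \sum_{K \in \omega_j}\frac{\Abs{K}}{d+1} = \frac{\Abs{\omega_j}}{d+1}$. Comparing the two formulas yields the exact identity $M_{lump} = \frac{d+2}{2} M_D$. Substituting $M_D = \frac{2}{d+2} M_{lump}$ into the sandwich $\frac12 M_D \le M \le (1+\frac d2) M_D$ from \cref{lem:M:1} gives at once $\frac{1}{d+2} M_{lump} \le M \le M_{lump}$, which is even slightly stronger than the asserted bound (since $\frac{d+2}{2}\ge 1$). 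So the first thing I would do is record the identity $M_{lump} = \frac{d+2}{2} M_D$ and then quote \cref{lem:M:1}.

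If one prefers a self-contained argument not routed through \cref{lem:M:1}, I would work element by element. On each $K$ the linear basis gives $M_K = \frac{\Abs{K}}{(d+1)(d+2)}\big(I_{d+1} + \mathbf{1}\mathbf{1}^T\big)$ with $\mathbf{1} = (1,\dots,1)^T\in\R^{d+1}$, and the consistent (row-sum) element lumped mass is $M_{lump,K} = \frac{\Abs{K}}{d+1} I_{d+1}$, since each row of $I_{d+1}+\mathbf{1}\mathbf{1}^T$ sums to $d+2$. The rank-one term $\mathbf{1}\mathbf{1}^T$ has eigenvalues $d+1$ and $0$, so $I_{d+1}+\mathbf{1}\mathbf{1}^T$ has eigenvalues $d+2$ (simple) and $1$ ($d$-fold), whence $I_{d+1}\le I_{d+1}+\mathbf{1}\mathbf{1}^T\le (d+2) I_{d+1}$ in the Loewner order and therefore $\frac{1}{d+2} M_{lump,K}\le M_K\le M_{lump,K}$. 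Writing $\bv^T M \bv = \sum_{K\in\Th} \bv_K^T M_K \bv_K$ (and likewise for $M_{lump}$) for any interior-vertex vector $\bv$ extended by zero to the boundary vertices, and summing the element bounds over $K$, produces the global inequality.

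I do not expect a real obstacle here: the estimate is in essence a renormalization, and both routes are short. The only points to be careful about are bookkeeping ones: using the \emph{consistent} (row-sum) definition of the element lumped mass so that $M_{lump,K}$ has the clean scalar-multiple-of-identity form above, and noting that restricting the assembled matrices to the interior vertices in the presence of the Dirichlet condition is harmless, since it amounts to a congruence by a coordinate projection and hence preserves Loewner inequalities.
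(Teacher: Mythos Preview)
Your argument is correct; in fact both routes you outline yield the sharper upper bound $M \le M_{lump}$, which implies the stated $M \le \frac{d+2}{2} M_{lump}$. The paper itself does not prove \cref{lem:M:2} at all: it simply imports the statement from \cite[Lemma~2.3]{HuaKamLan16} without argument, so there is nothing to compare against here beyond noting that your derivation is sound. One small wording point: the description of $M_{lump}$ as ``the diagonal matrix of row sums of $M$'' is only literally true for the full (unrestricted) mass matrix, since rows of the interior-vertex block near $\partial\Omega$ are missing the contributions $\int_\Omega \phi_j \phi_k\,d\bx$ from boundary basis functions $\phi_k$; but you immediately use the correct formula $M_{lump,jj} = \int_\Omega \phi_j\,d\bx = \Abs{\omega_j}/(d+1)$, and your closing remark about congruence by the coordinate projection handles the restriction cleanly in the element-wise route.
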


\begin{lemma}[{\cite[Lemma 4.1]{KamHuaXu14}}]\label{lem:A:1}
The stiffness matrix $A$ and its diagonal part $A_D$ satisfy
\begin{equation}
   A \leq (d+1) A_D .
   \label{eq:A}
\end{equation}
\end{lemma}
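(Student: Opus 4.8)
The plan is to reduce the claimed global inequality to an element-wise one and then invoke the Cauchy--Schwarz inequality with $d+1$ terms. First I would write $A = \sum_{K \in \Th} A_K$, where $A_K$ is the local stiffness matrix of $K$ prolonged by zero to the index set $\{1, \dotsc, N_{vi}\}$; since the diagonal of a sum of matrices is the sum of their diagonals, this also gives $A_D = \sum_{K \in \Th} (A_K)_D$. Hence it suffices to prove the local bound
\[
   A_K \le (d+1)\, (A_K)_D \qquad \text{for every } K \in \Th
\]
as an inequality between symmetric positive semidefinite matrices, and then sum over $K$. Discarding the rows and columns that correspond to boundary vertices only removes some coordinates and leaves both sides of a semidefinite inequality intact, so the Dirichlet condition causes no extra difficulty.

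For the local bound I would fix $K$, denote by $\nabla \phi_0|_K, \dotsc, \nabla \phi_d|_K$ the (constant) gradients of the $d+1$ linear basis functions on $K$, and compute, for $\xi = (\xi_0, \dotsc, \xi_d)^T \in \R^{d+1}$,
\[
   \xi^T A_K \xi
      = \int_K \Bigl( \sum_{i=0}^d \xi_i \nabla \phi_i \Bigr)^T \D
         \Bigl( \sum_{i=0}^d \xi_i \nabla \phi_i \Bigr) \dx
      = \int_K \Bigl\lvert \D^{\frac12} \sum_{i=0}^d \xi_i \nabla \phi_i \Bigr\rvert^2 \dx .
\]
Applying the Cauchy--Schwarz inequality to the sum of the $d+1$ vectors $\xi_i \D^{\frac12} \nabla \phi_i$ yields the pointwise estimate $\bigl\lvert \sum_{i=0}^d \xi_i \D^{\frac12} \nabla \phi_i \bigr\rvert^2 \le (d+1) \sum_{i=0}^d \xi_i^2\, (\nabla \phi_i)^T \D \nabla \phi_i$; integrating over $K$ then gives $\xi^T A_K \xi \le (d+1) \sum_{i=0}^d \xi_i^2 (A_K)_{ii} = (d+1)\, \xi^T (A_K)_D \xi$, which is exactly the local inequality.

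Summing over all $K \in \Th$ produces $A \le (d+1) A_D$. I do not anticipate a genuine obstacle: the only points that need a line of care are the assembly identity $A_D = \sum_K (A_K)_D$ and the harmless restriction to interior vertices. As a side remark, this argument does not use the partition-of-unity relation $\sum_{i} \nabla \phi_i|_K = 0$, and the constant $d+1$ is sharp, as already seen in one dimension, where the local stiffness matrix has eigenvalues $0$ and $2 (A_K)_{11}$.
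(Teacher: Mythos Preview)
Your argument is correct: the element-wise reduction together with the Cauchy--Schwarz bound $\bigl|\sum_{i=0}^d v_i\bigr|^2\le (d+1)\sum_{i=0}^d |v_i|^2$ applied to $v_i=\xi_i\,\D^{1/2}\nabla\phi_i$ yields exactly $A_K\le (d+1)(A_K)_D$, and summing over $K$ gives the global inequality; the restriction to interior vertices is indeed harmless since passing to a principal submatrix preserves a Loewner inequality.

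There is no proof in the present paper to compare against: the lemma is quoted verbatim from \cite[Lemma~4.1]{KamHuaXu14} without argument. Your proof is the standard one for this type of estimate (it is equivalent to the abstract fact that any symmetric positive semidefinite $n\times n$ matrix $B$ satisfies $B\le n\,B_D$, applied with $n=d+1$ to each local stiffness matrix), and your remark on sharpness in one dimension is also correct.
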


\begin{lemma}[{\cite[Lemma 2.5]{HuaKamLan16}}]\label{lem:A:1:1}
The diagonal entries of the stiffness matrix are bounded by
\begin{equation}
   C_{\hat\nabla} \sum\limits_{K\in \omega_j}
   \Abs{K} \lambda_{\min}\left(\FDF\right)
   \le A_{jj} \le
   C_{\hat\nabla} \sum\limits_{K\in \omega_j}
   \Abs{K} \lambda_{\max}\left(\FDF\right),
   \label{eq:A:2}
\end{equation}
where
$\D_K = \frac{1}{\Abs{K}} \int_K \D(\bx) \dx$ is the average of $\D$ on $K$
and $C_{\hat\nabla} = \frac{d+1}{d} {\left( \frac{d!}{\sqrt{d+1}} \right)}^{\frac{2}{d}}$.
\end{lemma}

The next two lemmas establish bounds for $A_{jj}$ with a more explicit geometric interpretation than \cref{eq:A:2}.
For this, we denote the diameter and the minimal height of $K$ in the metric $\D_K^{-1}$ by $h_{K, \D^{-1}}$ and
$a_{K, \D^{-1}}$, respectively.
The average element diameter is defined as
\begin{equation}
\hD = {\left(\frac{\Abs{\Omega}_{\D^{-1}}}{N}\right)}^{\frac{1}{d}},
\quad
\Abs{\Omega}_{\D^{-1}} = \sum_{K \in \mathcal{T}_h} \Abs{K} \sqrt{\det(\D_K^{-1})}.
\label{hD-1}
\end{equation}
Further, let $\hat{h}$, $\hat{\rho}$, and $\hat{a}$ be the diameter, the in-diameter, and the minimal height of the unitary equilateral $\hat{K}$, respectively, i.e.,
\[
   \hat{h} =\sqrt{2} {\left( \frac{d!}{\sqrt{d+1}} \right)}^{\frac{1}{d}},
   \quad \hat{\rho} = \sqrt{\frac{2}{d(d+1)}} \hat{h},
   \quad \hat{a} = \sqrt{\frac{d+1}{2 d}} \hat{h} .
\]

\begin{lemma}[{\cite[Lemmas 4.1 and 4.2]{HuaKam15b}}]
It holds
\begin{align}
   & \frac{h_{K,\D^{-1}}^2}{\hat{h}^2}
   \le \norm{{(F_K')}^T \D_K^{-1} F_K'}_2
   \le \frac{h_{K,\D^{-1}}^2}{\hat{\rho}^2}
   ,
   \label{eq:FDF:1}
   \\
   & \frac{\hat{a}^2}{a_{K,\D^{-1}}^2}
   \le \norm{\FDF}_2
   \le \frac{d^2 \hat{a}^2}{a_{K,\D^{-1}}^2}
   .
   \label{eq:FDF:2}
\end{align}
\end{lemma}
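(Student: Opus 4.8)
The plan is to reduce both double inequalities to statements about the spectral norm of a single symmetric positive definite $d\times d$ matrix and then invoke the elementary relationship between the spectral norm of a matrix and the geometry of the ellipsoid it defines. For \cref{eq:FDF:1}, the key observation is that ${(F_K')}^T \D_K^{-1} F_K'$ is exactly the matrix of the pullback of the metric $\D_K^{-1}$ to the reference element $\hat K$ under the affine map $F_K$. Consequently, for any $\hat{\bx}, \hat{\bu} \in \hat K$ with image points $\bx, \bu \in K$, the $\D_K^{-1}$-distance on $K$ equals the $\big({(F_K')}^T \D_K^{-1} F_K'\big)$-distance on $\hat K$, i.e.\ $\norm{\bx-\bu}_{\D_K^{-1}}^2 = (\hat{\bx}-\hat{\bu})^T {(F_K')}^T \D_K^{-1} F_K' (\hat{\bx}-\hat{\bu})$. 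The largest such value over $\hat K$ is controlled below by taking $\hat{\bx}-\hat{\bu}$ along the top eigenvector of ${(F_K')}^T \D_K^{-1} F_K'$ — which fits inside a ball of radius $\hat h$ but contains a ball of radius $\hat\rho$ — giving both the upper bound $h_{K,\D^{-1}}^2 \le \hat h^2 \,\norm{{(F_K')}^T \D_K^{-1} F_K'}_2$ and the lower bound via the in-ball of $\hat K$. Rearranging yields \cref{eq:FDF:1}. This argument is essentially geometric and I would expect it to occupy only a few lines once the pullback identity is stated.

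For \cref{eq:FDF:2}, I would first rewrite $\FDF = \big({(F_K')}^T \D_K^{-1} F_K'\big)^{-1}$, so that $\norm{\FDF}_2 = 1/\lambda_{\min}\big({(F_K')}^T \D_K^{-1} F_K'\big)$. Thus the two bounds in \cref{eq:FDF:2} are equivalent to two-sided bounds on $\lambda_{\min}\big({(F_K')}^T \D_K^{-1} F_K'\big)$, and the natural geometric quantity controlling the smallest eigenvalue of a pulled-back metric is the minimal height. Specifically, using the same pullback identity, the minimal $\D_K^{-1}$-height $a_{K,\D^{-1}}$ of $K$ corresponds under $F_K$ to the minimal height of $\hat K$ measured in the metric ${(F_K')}^T \D_K^{-1} F_K'$; bounding this metric height between $\hat a \sqrt{\lambda_{\min}}$ and $d\,\hat a \sqrt{\lambda_{\min}}$ — the factor $d$ coming from the distortion between the Euclidean height of a simplex and the heights in a general inner product, or equivalently from comparing a height to a diameter up to the dimension-dependent simplex constant — produces \cref{eq:FDF:2} after solving for $\lambda_{\min}$ and inverting.

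The step I expect to be the main obstacle is the careful bookkeeping in the second inequality: relating the minimal height of the reference simplex in a general metric to $\hat a$ and the smallest eigenvalue, with the correct dimension-dependent constant $d^2$. Unlike the diameter, which is cleanly sandwiched between the extreme eigenvalues' square roots, the height is a "distance from a vertex to the opposite facet," and controlling it in a non-Euclidean inner product requires comparing the facet's normal direction in the two metrics; the worst-case ratio is where the factor $d$ enters, and getting the constant sharp rather than merely finite is the delicate part. Everything else — the pullback identity, the passage from $\FDF$ to the inverse of ${(F_K')}^T \D_K^{-1} F_K'$, and the rearrangements — is routine. Since the lemma is quoted from \cite[Lemmas 4.1 and 4.2]{HuaKam15b}, I would in practice either cite that source directly or reproduce the short geometric argument sketched above.
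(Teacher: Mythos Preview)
The paper does not supply a proof for this lemma at all; it is stated with a bare citation to \cite[Lemmas~4.1 and~4.2]{HuaKam15b} and then used. Your final remark --- that in practice one would simply cite that source --- is therefore exactly what the paper does, so there is nothing to compare against.

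That said, your sketch is a faithful reconstruction of the geometric argument one finds in the cited reference. The reduction of \cref{eq:FDF:1} to the pullback identity $\|\bx-\bu\|_{\D_K^{-1}}^2 = (\hat{\bx}-\hat{\bu})^T {(F_K')}^T \D_K^{-1} F_K' (\hat{\bx}-\hat{\bu})$ together with the containment $B(\hat\rho/2)\subset \hat K \subset B(\hat h/2)$ is precisely the mechanism. For \cref{eq:FDF:2}, your inversion step $\FDF = \big({(F_K')}^T \D_K^{-1} F_K'\big)^{-1}$ and the passage to $\lambda_{\min}$ are correct, and you rightly flag the height-versus-eigenvalue comparison as the place where the dimension-dependent constant $d^2$ has to be earned. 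The lower bound $a_{K,\D^{-1}}\ge \hat a\sqrt{\lambda_{\min}}$ is immediate from $\|v\|_S \ge \sqrt{\lambda_{\min}(S)}\,\|v\|$; the upper bound $a_{K,\D^{-1}}\le d\,\hat a\sqrt{\lambda_{\min}}$ is the genuinely nontrivial inequality and in \cite{HuaKam15b} is obtained via the relation between simplex heights and the rows of the inverse edge matrix rather than a direct normal-vector distortion argument. Your description of the obstacle is accurate even if the mechanism you name (``comparing the facet's normal direction in the two metrics'') is not literally how the cited proof proceeds.
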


\begin{lemma}\label{lem:A:2}
It holds
\begin{equation}
  \hat{\rho}^2 C_{\hat\nabla} \hD^{-2}
   \sum\limits_{K\in \omega_j}\Abs{K}
      {\left( \frac{\hD}{\hKD} \right)}^2
   \le A_{jj} \le
   d^2 \hat{a}^2 C_{\hat\nabla} \hD^{-2}
   \sum\limits_{K\in \omega_j} \Abs{K} {\left( \frac{\hD}{\aKD} \right)}^2 .
   \label{eq:A:3}
\end{equation}
\end{lemma}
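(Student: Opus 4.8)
The plan is to start from the bound \cref{eq:A:2} on the diagonal entries $A_{jj}$ in terms of the extremal eigenvalues of $\FDF$ and then substitute the geometric estimates \cref{eq:FDF:2} to obtain \cref{eq:A:3}. Concretely, the lower bound in \cref{eq:A:2} reads $A_{jj} \ge C_{\hat\nabla} \sum_{K \in \omega_j} \Abs{K} \lambda_{\min}(\FDF)$, so I would first relate $\lambda_{\min}(\FDF)$ to $\hKD$. Since $\FDF$ is symmetric positive definite, its largest eigenvalue equals $\norm{\FDF}_2$ and, crucially, $\lambda_{\min}(\FDF)$ is the reciprocal of $\norm{{(F_K')}^T \D_K^{-1} F_K'}_2$. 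Applying the upper bound in \cref{eq:FDF:1} then gives $\lambda_{\min}(\FDF) \ge \hat\rho^2 / h_{K,\D^{-1}}^2$. Plugging this into \cref{eq:A:2} yields $A_{jj} \ge \hat\rho^2 C_{\hat\nabla} \sum_{K\in\omega_j} \Abs{K} \hKD^{-2}$, and multiplying and dividing by $\hD^2$ inside the sum rewrites $\hKD^{-2} = \hD^{-2}(\hD/\hKD)^2$, which is exactly the left-hand side of \cref{eq:A:3}.

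For the upper bound I would proceed symmetrically: from \cref{eq:A:2}, $A_{jj} \le C_{\hat\nabla} \sum_{K\in\omega_j} \Abs{K} \lambda_{\max}(\FDF)$, and $\lambda_{\max}(\FDF) = \norm{\FDF}_2 \le d^2 \hat a^2 / a_{K,\D^{-1}}^2$ by the upper bound in \cref{eq:FDF:2}. Substituting and again inserting the factor $\hD^2/\hD^2$ into each summand produces the right-hand side $d^2 \hat a^2 C_{\hat\nabla} \hD^{-2} \sum_{K\in\omega_j} \Abs{K} (\hD/\aKD)^2$. So the whole argument is essentially a two-line chaining of the two preceding lemmas together with the elementary fact that for a symmetric positive definite matrix $B$ one has $\lambda_{\min}(B) = 1/\norm{B^{-1}}_2$ and $\lambda_{\max}(B) = \norm{B}_2$, applied to $B = \FDF$ whose inverse is ${(F_K')}^T \D_K^{-1} F_K'$.

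The only point needing a little care — and the mild "obstacle" — is bookkeeping of which bound in \cref{eq:FDF:1}--\cref{eq:FDF:2} to use where: the lower bound on $A_{jj}$ needs an \emph{upper} bound on $h_{K,\D^{-1}}^2/\hat h^2 \le \norm{{(F_K')}^T\D_K^{-1}F_K'}_2$ rearranged to a \emph{lower} bound on $\lambda_{\min}$, using the $\hat\rho$ side; the upper bound on $A_{jj}$ needs the $d^2\hat a^2$ side of \cref{eq:FDF:2}. One should double-check that \cref{eq:FDF:1} is stated for the matrix ${(F_K')}^T \D_K^{-1} F_K'$ and \cref{eq:FDF:2} for its inverse $\FDF$, so that the reciprocal relationship between their $2$-norms and the min/max eigenvalues of $\FDF$ lines up correctly; once that is confirmed, no genuine computation remains, and the introduction of $\hD$ is purely cosmetic, rewriting $\hKD^{-2}$ and $\aKD^{-2}$ in the scale-invariant form $(\hD/\hKD)^2$ and $(\hD/\aKD)^2$ used throughout the rest of the paper.
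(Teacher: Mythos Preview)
Your proposal is correct and follows essentially the same route as the paper's proof: combine the upper bound in \cref{eq:FDF:2} with \cref{eq:A:2} for the right inequality, and use $\lambda_{\min}(\FDF)=\norm{{(F_K')}^T\D_K^{-1}F_K'}_2^{-1}$ together with the upper bound in \cref{eq:FDF:1} and \cref{eq:A:2} for the left inequality. The only difference is expository order (the paper does the upper bound first), and your careful remark about which matrix each of \cref{eq:FDF:1,eq:FDF:2} addresses is exactly the point the paper relies on.
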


\begin{proof}
Combining \cref{eq:FDF:2,eq:A:2} yields
\begin{align*}
   A_{jj} &\le
   d^2 \hat{a}^2 C_{\hat\nabla}
      \sum\limits_{K \in \omega_j} \Abs{K} a_{K, \D^{-1}}^{-2}
   = d^2 \hat{a}^2 C_{\hat\nabla} \hD^{-2}
      \sum\limits_{K \in \omega_j} \Abs{K} {\left( \frac{\hD}{\aKD} \right)}^2
   ,
\end{align*}
which gives the right inequality of \cref{eq:A:3}.
On the other hand, \cref{eq:FDF:1} gives
\[
   \lambda_{\min}(\FDF)
   = \norm{{(F_K')}^T \D_K^{-1} F_K'}_2^{-1}
   \ge \hat{\rho}^2 h_{K,\D^{-1}}^{-2}
   .
\]
Combining this with \cref{eq:A:2} leads to the left inequality of \cref{eq:A:3}.
\end{proof}

A mesh that is uniform in the metric $\D^{-1}$ (\emph{a $\D^{-1}$-uniform mesh}) satisfies $h_{K,\D^{-1}} \sim a_{K,\D^{-1}} \sim h_{\D^{-1}}$.
In such a case, \cref{eq:A:3} implies $A_{jj} \sim \Abs{\omega_j} \hD^{-2} \sim \Abs{\omega_j} N^{\frac{2}{d}}$.

\begin{lemma}[{\cite[Lemma~5.1]{KamHuaXu14}}]\label{lem:A:3}
The smallest eigenvalue of the stiffness matrix is bounded by
\begin{equation}
   \lambda_{\min}(A)  \gtrsim d_{\min} N^{-1} \EnonU^{-1},
\label{eq:khx2014}
\end{equation}
where
\begin{equation}
   \EnonU =
   \begin{cases}
      1,
      & d = 1,
      \\
      1 + \ln \left( \frac{\abs{\bar{K}}}{\Abs{K_{\min}}} \right),
      & d = 2,
      \\
      {\left( \frac{1}{N} \sum\limits_{K\in\Th}
         {\left( \frac{\abs{\bar{K}}}{\Abs{K}} \right)}^{\frac{d-2}{2}}
            \right)}^{\frac{2}{d}},
      & d\geq3,
   \end{cases}
   \label{eq:alpha-1}
\end{equation}
$\Abs{K_{\min}}$ and $\abs{\bar{K}} = \frac{1}{N}\abs{\Omega}$ are the minimal and the average element volumes and $d_{\min}$ is the global smallest eigenvalue of $\D$ (cf.\ \cref{eq:D:i}).
\end{lemma}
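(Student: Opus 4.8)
The plan is to reduce the estimate to a single mesh-dependent inequality comparing the Euclidean norm of the nodal vector with the $H^1$-seminorm, and then to prove that inequality by a weighted Hölder/Sobolev argument whose dimensional behaviour produces exactly $\EnonU$. Since $A$ is symmetric, $\lambda_{\min}(A) = \min_{\bv\neq\boldsymbol{0}}\bv^T A\bv/\norm{\bv}_2^2$, and for $v_h = \sum_j v_j\phi_j\in V^h$ with coefficient vector $\bv$ one has from \cref{eq:AM:i,eq:D:i} that $\bv^T A\bv = \int_\Omega{(\nabla v_h)}^T\D\nabla v_h\dx\ge d_{\min}\abs{v_h}_{H^1(\Omega)}^2$. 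Hence it suffices to prove
\begin{equation}
   \norm{\bv}_2^2 \;\lesssim\; N\,\EnonU\,\abs{v_h}_{H^1(\Omega)}^2
   \qquad\text{for all } v_h\in V^h ,
   \label{eq:A3-core}
\end{equation}
since then $\lambda_{\min}(A)\ge d_{\min}\min_{v_h}\abs{v_h}_{H^1(\Omega)}^2/\norm{\bv}_2^2\gtrsim d_{\min}N^{-1}\EnonU^{-1}$. A factor depending only on $\Omega$ will appear in \cref{eq:A3-core}; this is harmless since $\Omega$ is fixed (and in the critical dimension $d = 2$ it will even cancel out).

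To prove \cref{eq:A3-core} I would combine three ingredients. (i) An affine norm equivalence: since $v_h$ is affine on each $K$, mapping to $\hat K$ and comparing norms on the fixed finite-dimensional space of affine functions gives, for any fixed $p\ge1$, $\int_K\abs{v_h}^p\dx\sim\Abs{K}\sum_{\bx_j\in K}\abs{v_j}^p$, hence $\norm{v_h}_{L^p(\Omega)}^p\sim\sum_j\Abs{\omega_j}\abs{v_j}^p$. (ii) The Sobolev embedding $H_0^1(\Omega)\hookrightarrow L^p(\Omega)$: for $d\ge3$ the critical exponent $p = 2d/(d-2)$ with a dimensional constant, for $d = 2$ any $p$ with constant $\lesssim\sqrt p\,\Abs{\Omega}^{1/p}$ as $p\to\infty$, and for $d = 1$ simply $\norm{v_h}_{L^\infty(\Omega)}^2\le\Abs{\Omega}\abs{v_h}_{H^1(\Omega)}^2$. (iii) A Hölder split with exponents $p/2$ and $p/(p-2)$:
\begin{equation*}
   \norm{\bv}_2^2 = \sum_j\bigl(\Abs{\omega_j}^{2/p}\abs{v_j}^2\bigr)\,\Abs{\omega_j}^{-2/p}
   \;\le\;\Bigl(\sum_j\Abs{\omega_j}\abs{v_j}^p\Bigr)^{2/p}\Bigl(\sum_j\Abs{\omega_j}^{-2/(p-2)}\Bigr)^{(p-2)/p}.
\end{equation*}
By (i) and (ii) the first bracket is $\lesssim\abs{v_h}_{H^1(\Omega)}^2$ up to the squared Sobolev constant (the $p$-dependence of the constants in (i) is swallowed by the exponent $2/p$), so only the second bracket remains to be controlled.

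For $d\ge3$ and $p = 2d/(d-2)$ the remaining exponents are $2/(p-2) = (d-2)/2$ and $(p-2)/p = 2/d$; using $\Abs{\omega_j}\ge\Abs{K}$ for any $K\ni\bx_j$ and counting each element at most $d+1$ times gives $\sum_j\Abs{\omega_j}^{-(d-2)/2}\lesssim\sum_K\Abs{K}^{-(d-2)/2}$, which by \cref{eq:alpha-1} and $\abs{\bar K} = \Abs{\Omega}/N$ equals $\abs{\bar K}^{-(d-2)/2}N\EnonU^{d/2}$, and the powers then collapse to \cref{eq:A3-core}. For $d = 2$ one uses $\Abs{\omega_j}\ge\Abs{K_{\min}}$ to bound $\sum_j\Abs{\omega_j}^{-2/(p-2)}\le N_{vi}\Abs{K_{\min}}^{-2/(p-2)}$, inserts the $\sqrt p$-constant, and collects factors into $\norm{\bv}_2^2\lesssim pN\bigl(\abs{\bar K}/\Abs{K_{\min}}\bigr)^{2/p}\abs{v_h}_{H^1(\Omega)}^2$; choosing $p\sim\max\{2,\ln(\abs{\bar K}/\Abs{K_{\min}})\}$ makes the prefactor $\lesssim 1 + \ln(\abs{\bar K}/\Abs{K_{\min}}) = \EnonU$. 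For $d = 1$, $\norm{\bv}_2^2\le N_{vi}\norm{v_h}_{L^\infty(\Omega)}^2\lesssim N\Abs{\Omega}\abs{v_h}_{H^1(\Omega)}^2$ and $\EnonU = 1$.

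The hard part will be the $d = 2$ case, and precisely the fact that one cannot afford to be wasteful there. The shortcut $\norm{\bv}_2^2\le N_{vi}\norm{v_h}_{L^\infty(\Omega)}^2$ followed by a plain discrete Sobolev inequality $\norm{v_h}_{L^\infty(\Omega)}^2\lesssim\ln(\Abs{\Omega}/\Abs{K_{\min}})\abs{v_h}_{H^1(\Omega)}^2$ costs a spurious factor $\ln N$, because it replaces every patch volume by $\Abs{K_{\min}}$. One must instead carry the weights $\Abs{\omega_j}$ through the Hölder split and trade the Lebesgue exponent $p$ against the \emph{sharp} $\sqrt p$-growth of the two-dimensional Sobolev (Gagliardo--Nirenberg/Ladyzhenskaya) constant; obtaining that $\sqrt p$ is the delicate point, and it is exactly what leaves only $\ln(\abs{\bar K}/\Abs{K_{\min}})$. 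In $d\ge3$ the corresponding subtlety is purely combinatorial — the passage from $\sum_j\Abs{\omega_j}^{-(d-2)/2}$ to $N\EnonU^{d/2}$, which is what fixes the exponent $(d-2)/2$ appearing in \cref{eq:alpha-1}. The remaining pieces — the Rayleigh-quotient reduction, the bound $\D\ge d_{\min}I$, and the affine norm equivalences — are routine.
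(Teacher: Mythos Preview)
Your proposal is correct and follows essentially the same strategy as the cited reference~\cite{KamHuaXu14}. Note that the present paper does not prove this lemma at all---it is quoted verbatim from~\cite[Lemma~5.1]{KamHuaXu14}---but the underlying technique is displayed in the proof of \cref{lem:A-maxmin} (the case $P = M_D + \Dt\,A_D$), and it is precisely your weighted H\"older/Sobolev argument: free weights $s_K$ and a free Lebesgue exponent $q$ in two dimensions, optimised at $q \sim \ln\nonUinLog$ to produce the logarithm, and the critical exponent in $d\ge 3$.

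The only cosmetic difference is that you route the estimate through the continuous embedding $H_0^1(\Omega)\hookrightarrow L^p(\Omega)$ and then pull back to nodal values, whereas the original argument in~\cite{KamHuaXu14} stays at the discrete level throughout, carrying the element weights $s_K$ explicitly (cf.\ \cref{eq:implicit:Euler:lmin:scaled:i}). Your handling of the $p$-dependence of the affine norm-equivalence constant (the observation that $C_p^{2/p}$ stays bounded) and the cancellation of the stray $N^{2/p}$ in the two-dimensional case are exactly the points that make the argument go through, and you have them right.
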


The factor $\EnonU$ shows that the dependence of the lower bound on the mesh non-uniformity is mild although getting stronger in higher dimensions: $\EnonU$ is mesh-independent in 1d, contains only $\ln \left ({\abs{\bar{K}}}/{\Abs{K_{\min}}}\right )$ in 2d, and involves the generalized $d/2$-mean of the ratio ${\left({\abs{\bar{K}}}/{\Abs{K}}\right)}^{(d-2)/{d}}$ over all elements for $d\ge 3$.

The next lemmas summarize bounds for the extremal eigenvalues of $M$ and $A$.

\begin{lemma}\label{lem:M-maxmin}
The extremal eigenvalues of $P^{-\frac{1}{2}} MP^{-\frac{1}{2}} $ are bounded by
\begin{equation}
   \lambda_{\max}(P^{-\frac{1}{2}}MP^{-\frac{1}{2}})
   \le \begin{cases}
      \frac{\abs{\omega_{\max}}}{d+1},
   & P = I_{N_{vi}},
   \\
   1 + \frac{d}{2},
   & P = M_{D}, ~ M_{lump}, ~ M_D + \Dt{}\, A_{D},
   \end{cases}
   \label{M-max}
\end{equation}
and
\begin{equation}
   \lambda_{\min}(P^{-\frac{1}{2}}MP^{-\frac{1}{2}})
   \ge \begin{cases}
      \frac{\abs{\omega_{\min}}}{(d+1)(d+2)},
      & P = I_{N_{vi}},
   \\
   1/2,
   & P = M_D,
   \\
   1/(d+2),
   & P = M_{lump},
   \\
   C {\left(1 + \Dt{}\, \hD^{-2}
   \max\limits_{j} \sum\limits_{K\in \omega_j} \frac{\Abs{K}}{\Abs{\omega_j}}
      {\left(\frac{\hD}{\aKD}\right)}^2 \right)}^{-1}
      & P = M_D + \Dt{}\, A_D,
   \end{cases}
   \label{M-min}
\end{equation}
where $\Abs{\omega_{\max}}$ and $\Abs{\omega_{\min}}$ are the maximal and minimal patch volumes, respectively.
\end{lemma}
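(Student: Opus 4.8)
The plan is to handle the four preconditioners separately, in each case reducing the claim to the preliminary lemmas of this section. For $P = I_{N_{vi}}$ I would use the elementwise splitting $M = \sum_{K\in\Th} M_K$, where $M_K\in\R^{N_{vi}\times N_{vi}}$ is the local mass matrix of $K$ extended by zeros. Since the basis functions are linear and the mesh is affine, $M_K = \Abs{K}\,\hat M$ on the index set of the vertices of $K$ (recall $\Abs{\hat K} = 1$), where $\hat M = \frac{1}{(d+1)(d+2)}\bigl(I_{d+1} + \mathbf{1}\mathbf{1}^T\bigr)$ is the reference element mass matrix; its eigenvalues are $\frac{1}{d+1}$ (simple) and $\frac{1}{(d+1)(d+2)}$ (multiplicity $d$), so $\frac{\Abs{K}}{(d+1)(d+2)} I \le M_K \le \frac{\Abs{K}}{d+1} I$ on that index set. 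Summing the quadratic forms over $K$ and using $\sum_{K\ni j}\Abs{K} = \Abs{\omega_j}$ gives, for all $\bx\in\R^{N_{vi}}$,
\[
   \frac{1}{(d+1)(d+2)}\sum_j \Abs{\omega_j}\,x_j^2
   \;\le\; \bx^T M \bx \;\le\;
   \frac{1}{d+1}\sum_j \Abs{\omega_j}\,x_j^2 ,
\]
whence the $P = I_{N_{vi}}$ rows of \cref{M-max,M-min} follow by replacing $\Abs{\omega_j}$ with $\Abs{\omega_{\max}}$ and $\Abs{\omega_{\min}}$.

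For $P = M_D$ and $P = M_{lump}$ the bounds are immediate: conjugating the two-sided relations $\frac12 M_D \le M \le (1+\frac d2)M_D$ of \cref{lem:M:1} and $\frac{1}{d+2}M_{lump}\le M\le\frac{d+2}{2}M_{lump}$ of \cref{lem:M:2} by $P^{-\frac12}$ turns them into the asserted spectral bounds for $P^{-\frac12}MP^{-\frac12}$, using $\frac{d+2}{2} = 1+\frac d2$.

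The case $P = M_D + \Dt\,A_D$ requires the most care. For $\lambda_{\max}$, the diagonal of $A$ is positive (as $A$ is symmetric positive definite), so $M_D + \Dt\,A_D \ge M_D$ and hence $M \le (1+\frac d2)M_D \le (1+\frac d2)(M_D+\Dt\,A_D)$, giving $\lambda_{\max}\le 1+\frac d2$. For $\lambda_{\min}$, since $M_D$ and $A_D$ are diagonal, $M_D^{-\frac12}(M_D+\Dt\,A_D)M_D^{-\frac12} = I + \Dt\,M_D^{-\frac12}A_D M_D^{-\frac12}$ is diagonal with entries $1 + \Dt\,A_{jj}/M_{jj}$, so $M_D + \Dt\,A_D \le \bigl(1 + \Dt\max_j A_{jj}/M_{jj}\bigr)M_D$. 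Combining with $M\ge\frac12 M_D$ yields
\[
   M \;\ge\; \tfrac12 M_D \;\ge\;
   \tfrac12\Bigl(1 + \Dt\max_j \tfrac{A_{jj}}{M_{jj}}\Bigr)^{-1}(M_D + \Dt\,A_D),
\]
and conjugation by $P^{-\frac12}$ gives the lower bound for $\lambda_{\min}(P^{-\frac12}MP^{-\frac12})$. It remains to estimate $A_{jj}/M_{jj}$: inserting the upper bound of \cref{eq:A:3} for $A_{jj}$ and the exact value $M_{jj} = \frac{2\Abs{\omega_j}}{(d+1)(d+2)}$ of \cref{eq:M} gives $A_{jj}/M_{jj}\lesssim \hD^{-2}\sum_{K\in\omega_j}\frac{\Abs{K}}{\Abs{\omega_j}}(\hD/\aKD)^2$, and absorbing the resulting generic constant into the factor $(1+\cdots)^{-1}$ yields the stated bound.

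I expect the only real difficulty to be bookkeeping rather than anything deep: keeping the diagonal scalings straight in the $P = M_D + \Dt\,A_D$ case and verifying that the generic constant $C$ in \cref{M-min} correctly absorbs the constants coming from \cref{lem:M:1,lem:A:2}. The one genuine (though standard) computation is the reference-element mass matrix spectrum used for $P = I_{N_{vi}}$.
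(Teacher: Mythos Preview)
Your proposal is correct and follows essentially the same route as the paper. The only presentational difference is the $P = I_{N_{vi}}$ case: the paper simply invokes \cref{lem:M:1} (the two-sided bound $\tfrac12 M_D\le M\le(1+\tfrac d2)M_D$ together with $M_{jj}=\tfrac{2\Abs{\omega_j}}{(d+1)(d+2)}$) and reads off $\lambda_{\max}(M)\le\tfrac{\Abs{\omega_{\max}}}{d+1}$ and $\lambda_{\min}(M)\ge\tfrac{\Abs{\omega_{\min}}}{(d+1)(d+2)}$, whereas you re-derive these by computing the spectrum of the reference-element mass matrix and assembling. Your computation is exactly how \cref{lem:M:1} is proved in the cited reference, so this is a more self-contained variant rather than a genuinely different argument; it yields the identical constants. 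For $P=M_D+\Dt\,A_D$ your factoring through $1+\Dt\max_j A_{jj}/M_{jj}$ is the same estimate the paper writes as $P_{jj}\lesssim\Abs{\omega_j}(1+\Dt\,\hD^{-2}\sum_{K\in\omega_j}\tfrac{\Abs{K}}{\Abs{\omega_j}}(\hD/\aKD)^2)$, just organized slightly differently.
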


\begin{proof}
The inequalities \cref{M-max} and \cref{M-min} for $P = I_{N_{vi}}$ (no preconditioning), $P = M_D$, and $M_{lump}$ follow from \cref{lem:M:1,lem:M:2}.
For $P = M_D + \Dt{}\, A_D$, \cref{eq:M,eq:A:3} give
\begin{equation}
   P_{jj}
   \lesssim \Abs{\omega_j}
      + \Dt{}\, \hD^{-2}
   \sum\limits_{K \in \omega_j}\Abs{K} {\left( \frac{h_{\D^{-1}}}{a_{K, \D^{-1}}} \right)}^2
   .
   \label{eq:Pkk-0}
\end{equation}
Then, \cref{M-min} follows from
\[
   \bv^T P^{-\frac{1}{2}} M P^{-\frac{1}{2}} \bv
      \gtrsim \sum\limits_j \frac{v_j^2 \Abs{\omega_j}}{P_{jj}}
      \gtrsim \sum\limits_j v_j^2
         {\left( 1 + \Dt{}\, \hD^{-2}
      \sum\limits_{K \in \omega_j} \frac{\Abs{K}}{\Abs{\omega_j}} {\left( \frac{h_{\D^{-1}}}{a_{K, \D^{-1}}} \right)}^2 \right)}^{-1}.
\qedhere{}
\]
\end{proof}

\begin{lemma}\label{lem:A-maxmin}
The extremal eigenvalues of $P^{-\frac{1}{2}} A P^{-\frac{1}{2}} $ are bounded by
\begin{equation}
   \lambda_{\max}(P^{-\frac{1}{2}} A P^{-\frac{1}{2}})
   \le \begin{cases}
      C \hD^{-2}
      \max\limits_j \sum\limits_{K\in \omega_j} \Abs{K} {\left(\frac{\hD}{\aKD}\right)}^2,
      & P = I_{N_{vi}},
      \\
      C \hD^{-2}
      \max\limits_{j}
         \sum\limits_{K\in \omega_j} \frac{\Abs{K}}{\Abs{\omega_j}} {\left(\frac{\hD}{\aKD}\right)}^2,
         & P = M,~M_D,~M_{lump},
      \\
      (d+1) \Dt{}^{-1},
      & P = M_D + \Dt{}\, A_D,
   \end{cases}
   \label{eq:A:max}
\end{equation}
and
\begin{equation}
   \lambda_{\min}(P^{-\frac{1}{2}}A P^{-\frac{1}{2}})
   \gtrsim \begin{cases}
      d_{\min} N^{-1} \EnonU^{-1},
      & P = I_{N_{vi}},
      \\
      \lambda_{\D},
      & P = M,~M_D,~M_{lump},
      \\
      d_{\min} \DnonU^{-1},
      & P = M_D + \Dt{}\, A_D,
   \end{cases}
   \label{eq:A:min}
\end{equation}
where $\lambda_{\D}$ is the minimal eigenvalue of the operator $-\nabla \cdot (\D \nabla )$ and
\begin{align}
   \DnonU &=
   \begin{cases}
      1 +  \Dt{}\, \hD^{-2}
         \sum\limits_{K} \Abs{K} {\left(\frac{\hD}{\aKD}\right)}^2,
      & d = 1,
      \\
      \left(1+ \Abs{\ln \nonUinLog}\right)
         \left(1 + \Dt{}\, \hD^{-2}
         \sum\limits_{K} \Abs{K} {\left(\frac{\hD}{\aKD}\right)} ^2\right),
      & d = 2,
      \\
      1 + \Dt{}\, \hD^{-2}
         {\left(\sum\limits_K \abs{K} {\left( \frac{\hD}{\aKD} \right)}^d \right)}^{\frac{2}{d}},
      & d \ge 3,
   \end{cases}
   \label{eq:beta-1}
   \\
   \nonUinLog &=
   \frac{1 + \Dt{}\, \hD^{-2}
      \max\limits_K {\left(\frac{\hD}{\aKD}\right)}^2}
   {1 + \Dt{}\, \hD^{-2}
   \sum\limits_{K} \frac{\Abs{K}}{\Abs{\Omega}} {\left(\frac{\hD}{\aKD}\right)}^2} .
   \label{eq:gamma-1}
\end{align}
\end{lemma}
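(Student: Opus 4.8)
The plan is to prove the upper bounds \cref{eq:A:max} and the lower bounds \cref{eq:A:min} separately, and in each to handle the four preconditioners by reducing to the diagonal comparisons \cref{lem:M:1,lem:M:2,lem:A:1}, the entrywise bound \cref{eq:A:3}, the continuous variational principle, and the discrete‑Sobolev machinery behind \cref{lem:A:3}.

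For \cref{eq:A:max}: when $P = I_{N_{vi}}$ combine \cref{lem:A:1} ($A \le (d+1)A_D$) with the right inequality of \cref{eq:A:3} to get $\lambda_{\max}(A) \le (d+1)\max_j A_{jj} \lesssim \hD^{-2}\max_j\sum_{K\in\omega_j}\Abs{K}(\hD/\aKD)^2$. When $P \in \{M, M_D, M_{lump}\}$, \cref{lem:M:1,lem:M:2} make $P$ spectrally equivalent, up to a $d$-dependent constant, to the diagonal matrix $M_D$ with $M_{jj} = 2\Abs{\omega_j}/((d+1)(d+2))$, so $\lambda_{\max}(P^{-1/2}AP^{-1/2}) \lesssim \max_j A_{jj}/\Abs{\omega_j}$ and \cref{eq:A:3} again finishes it. When $P = M_D + \Dt A_D$, the estimate is immediate: $M_D$ being positive definite, $A \le (d+1)A_D \le (d+1)\Dt^{-1}(M_D + \Dt A_D)$, hence $\lambda_{\max} \le (d+1)\Dt^{-1}$.

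For \cref{eq:A:min}: the case $P = I_{N_{vi}}$ is exactly \cref{lem:A:3}. For $P \in \{M, M_D, M_{lump}\}$, pass to the Rayleigh quotient, writing $\bv^T A\bv = \int_\Omega(\nabla v_h)^{T}\D\nabla v_h\dx$ and $\bv^T M\bv = \Norm{v_h}_{L^2(\Omega)}^2$ for $v_h = \sum_{j} v_j\phi_j \in V^h$; since $V^h \subset H_0^1(\Omega)$, the variational characterization of $\lambda_{\D}$ gives $\bv^T A\bv \ge \lambda_{\D}\bv^T M\bv$, and \cref{lem:M:1,lem:M:2} transfer this to $M_D$ and $M_{lump}$ at the cost of a $d$-dependent constant. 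The case $P = M_D + \Dt A_D$ is the crux: I would show $\bv^T(M_D + \Dt A_D)\bv \lesssim d_{\min}^{-1}\DnonU\,\bv^T A\bv$, which by the Rayleigh quotient yields the last line of \cref{eq:A:min}. The mass part, which accounts for the leading ``$1$'' in \cref{eq:beta-1}, is easy: $\bv^T M_D\bv \lesssim \bv^T M\bv = \Norm{v_h}_{L^2(\Omega)}^2 \le \lambda_{\D}^{-1}\bv^T A\bv \lesssim d_{\min}^{-1}\bv^T A\bv$, using $\lambda_{\D}\gtrsim d_{\min}$ (Poincar\'e together with \cref{eq:D:i}). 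For the stiffness part, bound $A_{jj}$ by the right side of \cref{eq:A:3} and regroup the vertex sum into an element sum; as $v_h$ is linear on each $K$, its maximum there is attained at a vertex, so $\bv^T A_D\bv = \sum_j v_j^2 A_{jj} \lesssim \sum_{K\in\Th}\Abs{K}\aKD^{-2}\Norm{v_h}_{L^\infty(K)}^2$.

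The main obstacle is bounding this element sum by $d_{\min}^{-1}\hD^{-2}(\text{nonuniformity})\,\bv^T A\bv$ with the nonuniformity factor of \cref{eq:beta-1,eq:gamma-1}, and this is where the dimension enters. For $d \ge 3$ I would apply H\"older's inequality with exponents $d/2$ and $d/(d-2)$ to split $\sum_K\Abs{K}\aKD^{-2}\Norm{v_h}_{L^\infty(K)}^2 \le (\sum_K\Abs{K}\aKD^{-d})^{2/d}(\sum_K\Abs{K}\Norm{v_h}_{L^\infty(K)}^{2d/(d-2)})^{(d-2)/d}$, bound $\Abs{K}\Norm{v_h}_{L^\infty(K)}^{2d/(d-2)}$ by $C\Norm{v_h}_{L^{2d/(d-2)}(K)}^{2d/(d-2)}$ using the scaling to $\hat K$ and finite‑dimensional norm equivalence, sum over $K$, and invoke the Sobolev embedding $H_0^1(\Omega)\hookrightarrow L^{2d/(d-2)}(\Omega)$ together with \cref{eq:D:i}; this gives $\lesssim (\sum_K\Abs{K}\aKD^{-d})^{2/d}d_{\min}^{-1}\bv^T A\bv = \hD^{-2}(\sum_K\Abs{K}(\hD/\aKD)^d)^{2/d}d_{\min}^{-1}\bv^T A\bv$, matching the nontrivial part of \cref{eq:beta-1}. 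For $d = 2$, where $H_0^1(\Omega)$ does not embed into $L^\infty$, I would instead use the limiting estimate $\Norm{v}_{L^p(\Omega)}\lesssim\sqrt{p}\,\Norm{\nabla v}_{L^2(\Omega)}$ for $v\in H_0^1(\Omega)$, distribute it over the element sum by H\"older, and optimize over $p$; the optimal exponent is of order $\ln\nonUinLog$, which is what produces the factor $1 + \Abs{\ln\nonUinLog}$ in \cref{eq:beta-1} --- this borderline case, which mirrors the treatment of $\EnonU$ in the proof of \cref{lem:A:3}, is the delicate point. For $d = 1$ the Sobolev inequality $\Norm{v_h}_{L^\infty(\Omega)}\lesssim\Norm{\nabla v_h}_{L^2(\Omega)}$ applies directly and the element sum collapses to $(\sum_K\Abs{K}\aKD^{-2})\,d_{\min}^{-1}\bv^T A\bv$, matching \cref{eq:beta-1}. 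Adding the mass and stiffness contributions then gives $\bv^T(M_D + \Dt A_D)\bv \lesssim d_{\min}^{-1}\DnonU\,\bv^T A\bv$, which completes \cref{eq:A:min}.
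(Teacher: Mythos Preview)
Your proposal is correct and follows essentially the same route as the paper: the upper bounds are handled identically, the lower bounds for $P=I_{N_{vi}}$ and $P\in\{M,M_D,M_{lump}\}$ are the same (the paper cites~\cite{Fix73} for the latter, which is precisely your variational argument), and for $P=M_D+\Dt A_D$ the paper invokes the discrete Sobolev inequality from~\cite[Lemma~5.1]{KamHuaXu14} with free weights $s_K$ and a H\"older exponent~$q$, which is exactly the continuous Sobolev/H\"older/optimization argument you describe, phrased at the discrete level.

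There is one organizational point where your write-up drifts from the stated lemma. You split $P_{jj}=M_{jj}+\Dt A_{jj}$ and treat the two pieces separately; for $d=1$ and $d\ge 3$ this is harmless and reproduces \cref{eq:beta-1} verbatim, but for $d=2$ the optimization over the H\"older exponent, when applied to the stiffness part alone, yields a logarithm of $\max_K r_K\big/\bigl(\sum_K\frac{\Abs{K}}{\Abs{\Omega}}r_K\bigr)$ (with $r_K=\aKD^{-2}$), \emph{not} of the quantity $\nonUinLog$ in \cref{eq:gamma-1}, which involves $1+\Dt r_K$. The two ratios are not comparable in general, so your split argument proves a close cousin of the stated bound rather than the bound itself. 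The fix is immediate and is what the paper does: use \cref{eq:Pkk-0} to bound $P_{jj}\lesssim\sum_{K\in\omega_j}\Abs{K}(1+\Dt r_K)$ as a whole, carry the combined weight $1+\Dt r_K$ through the H\"older step, and then optimize; the optimal exponent is then $q\sim\ln\nonUinLog$ and the factor $1+\Abs{\ln\nonUinLog}$ of \cref{eq:beta-1} drops out exactly.
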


\begin{proof}
%We first prove \cref{eq:A:max}.
For $P = I_{N_{vi}}$, \cref{eq:A:max} follows from \cref{lem:A:1,lem:A:2}.
For $P = M$, \cref{eq:M,eq:A} give
\[
   \lambda_{\max}(P^{-\frac{1}{2}} A P^{-\frac{1}{2}} )
   = \max_{\bv \neq 0} \frac{\bv^T A \bv}{\bv^T P \bv}
   \le 2(d+1) \max_{\bv \neq 0} \frac{\bv^T A_D \bv}{\bv^T M_D \bv}
   = 2(d+1) \max\limits_j \frac{A_{jj}}{M_{jj}}
   ,
 \]
which, together with \cref{eq:M,eq:A:3}, implies \cref{eq:A:max}.
For $P=M_D$ and $P=M_{lump}$, \cref{eq:A:max} follows from \cref{lem:M:1,lem:M:2}.
For $P = M_D + \Dt{}\, A_D$, \cref{eq:A:max} follows from \cref{eq:A}.

The inequality \cref{eq:A:min} for $P = I_{N_{vi}}$ follows from \cref{lem:A:3}.
For $P=M$ and, similarly for $P=M_D$ and $P=M_{lump}$, \cref{eq:A:min} follows from the basic property of the conformal FE approximation of elliptic eigenvalue problems (e.g.,~\cite[Theorem 1]{Fix73}), since the eigenvalue problem for $P^{-\frac{1}{2}}A P^{-\frac{1}{2}}$ is a FE approximation to the eigenvalue problem for the operator $-\nabla \cdot (\D \nabla )$.

For $P = M_D + \Dt{}\, A_D$, using the strategy in the proof of~\cite[Lemma~5.1]{KamHuaXu14}, we have
\begin{equation}
   \bv^T P^{-\frac{1}{2}} A P^{-\frac{1}{2}} \bv
   \gtrsim d_{\min}
   \times \begin{cases}
      \left(\sum\limits_j v_j^2\right)
      \cdot {\left( \sum\limits_j P_{jj} \right)}^{-1},
      & d = 1,\\
      \frac{1}{q} {\left(\sum\limits_K s_K^{\frac{q}{q-2}}
         \right)}^{- \frac{q-2}{q}} \sum\limits_j v_j^2 P_{jj}^{-1}
      \sum\limits_{K \in \omega_j} s_K \Abs{K}^{\frac{2}{q}},
      & d = 2,\\
      {\left(\sum\limits_K s_K^{\frac{d}{2}} \right)}^{- \frac{2}{d}}
         \sum\limits_j v_j^2 P_{jj}^{-1}
      \sum\limits_{K \in \omega_j} s_K \Abs{K}^{\frac{d-2}{d}},
      & d\geq3,
   \end{cases}
   \label{eq:implicit:Euler:lmin:scaled:i}
\end{equation}
with a parameter $q > 2$ and some not-all-zero nonnegative numbers $s_K$, $K \in \Th$ (to be determined later).
Here, $\sum\limits_{K}$ denotes the summation over all elements in $\Th$.

For notational simplicity, we denote $r_K = \aKD^{-2}$ and rewrite \cref{eq:Pkk-0} as
\begin{equation}
   P_{jj} \lesssim \sum\limits_{K \in \omega_j}\abs{K} (1+ \Dt{}\, r_K) .
\label{eq:Pjj}
\end{equation}

For $d=1$, \cref{eq:Pjj} implies
\[
   \bv^T P^{-\frac{1}{2}} A P^{-\frac{1}{2}}\bv
   \gtrsim  \frac{d_{\min} \sum\limits_j v_j^2}
   {\sum\limits_{K} \Abs{K} \left(1 + \Dt{}\, r_K \right)}
\]
and therefore
\begin{equation}
     \lambda_{\min}(P^{-\frac{1}{2}} A P^{-\frac{1}{2}})
      \gtrsim \frac{d_{\min} }
      {\sum\limits_{K} \Abs{K} \left(1 + \Dt{}\, r_K \right) }.
   \label{eq:A:4:1d}
\end{equation}

For $d=2$, we choose $s_K = \Abs{K}^{1 - \frac{2}{q}} (1 + \Dt{}\, r_K)$.
Then, \cref{eq:Pjj} implies
\[
   P_{jj}^{-1} \sum\limits_{K \in \omega_j} s_K \Abs{K}^{\frac{2}{q}} \ge C
\]
and using \cref{eq:implicit:Euler:lmin:scaled:i} we obtain
\[
   \bv^T P^{-\frac{1}{2}} A P^{-\frac{1}{2}} \bv
      \gtrsim \frac{d_{\min} \sum\limits_k v_k^2}
         {q {\left(\sum\limits_K \Abs{K} {(1+\Dt{}\, r_K)}^{\frac{q}{q-2}}
            \right)}^{ \frac{q-2}{q}} } .
\]
The denominator tends to infinity as $q\to \infty$ and to $2 \max_{K} (1 + \Dt{}\, r_K)$ as $q \to 2^{+}$.
To find an optimal choice for $q$, we first estimate the denominator as
\begin{align*}
   q {\left(\sum\limits_K \Abs{K} {\left(1+\Dt{}\, r_K\right)}^{\frac{q}{q-2}}
      \right)}^{ \frac{q-2}{q}}
   &= q {\left(\sum\limits_K \Abs{K} (1+\Dt{}\, r_K)
      \cdot {(1+\Dt{}\, r_K)}^{\frac{2}{q-2}} \right)}^{\frac{q-2}{q}}
   \\
   &\le q {\left(\left(\sum\limits_K \Abs{K} (1+\Dt{}\, r_K)\right)
      \cdot {\left(1+\Dt{}\, \max\limits_K r_K\right)}^{\frac{2}{q-2}}
         \right)}^{ \frac{q-2}{q}}
   \\
   &= q \nonUinLog^{\frac{2}{q}} \sum\limits_K \Abs{K} (1+\Dt{}\, r_K),
\end{align*}
where
\begin{equation}
   \nonUinLog = \frac{ 1 + \Dt{}\, \max\limits_K r_K}
      {\sum\limits_K \Abs{K} (1+\Dt{}\, r_K)} .
   \label{eq:gamma-0}
\end{equation}
If $\nonUinLog > e$, we choose $q = 2 \ln \nonUinLog$ and obtain $q \nonUinLog^{\frac{2}{q}} = 2 e \ln \nonUinLog$.
If $\nonUinLog \le e$, we use $q \to 2^{+}$ and obtain $q \nonUinLog^{\frac{2}{q}} \le 2 e$.
Combining these two cases yields
\[
   q {\left(\sum\limits_K \Abs{K} {(1+\Dt{}\, r_K)}^{\frac{q}{q-2}}
      \right)}^{ \frac{q-2}{q}}
   \le 2 e \left(1+ \Abs{\ln \nonUinLog}\right)
      \left(\sum\limits_K \Abs{K} (1+\Dt{}\, r_K)\right)
\]
and therefore
\begin{equation}
   \lambda_{\min}(P^{-\frac{1}{2}} A P^{-\frac{1}{2}})
   \gtrsim  \frac{d_{\min}}
         {\left(1+ \Abs{\ln \nonUinLog}\right)
         \left(\sum\limits_K \Abs{K} (1+\Dt{}\, r_K)\right) } .
   \label{eq:A:4:2d}
\end{equation}

For $d\geq3$, we choose $s_K = \Abs{K}^{\frac{2}{d}} (1+\Dt{}\, r_K)$ such that $P_{jj}^{-1} \sum\limits_{K \in \omega_j} r_K \Abs{K}^{\frac{2}{q}} \ge C$.
From \cref{eq:implicit:Euler:lmin:scaled:i}, we have
\[
   \bv^T P^{-\frac{1}{2}} A P^{-\frac{1}{2}} \bu
      \gtrsim \frac{d_{\min} \sum\limits_j v_j^2}
      {{\left(\sum\limits_{K} \Abs{K}
         {\left(1 + \Dt{}\, r_K \right)}^{\frac{d}{2}} \right)}^{\frac{2}{d}} } ,
\]
which gives
\begin{equation}
   \lambda_{\min}(P^{-\frac{1}{2}} A P^{-\frac{1}{2}})
      \gtrsim  \frac{d_{\min} }
            { {\left(\sum\limits_{K} \Abs{K}
               {\left(1 + \Dt{}\, r_K \right)}^{\frac{d}{2}}
                  \right)}^{\frac{2}{d}} } .
\label{eq:A:4:3d}
\end{equation}

Further, \cref{eq:gamma-0} can be rewritten into \cref{eq:gamma-1} and
\begin{align*}
   \sum\limits_{K} \Abs{K} (1 + \Dt{}\, r_K)
   & = \abs{\Omega} +  \Dt{}\, \hD^{-2}
   \sum\limits_{K} \Abs{K} {\left(\frac{\hD}{\aKD}\right)}^2
   ,
   \\
   {\left( \sum\limits_K \abs{K} {(1+\Dt{}\, r_K )}^{\frac{d}{2}} \right)}^{\frac{2}{d}}
   & \lesssim \Abs{\Omega}^{\frac{2}{d}} + \Dt{}\, \hD^{-2}
      {\left( \sum\limits_K \abs{K} {\left(\frac{\hD}{\aKD}\right)}^d
      \right)}^{\frac{2}{d}}
   .
\end{align*}
Combining these with \crefrange{eq:A:4:1d}{eq:A:4:3d} gives \cref{eq:A:min} with $P = M_D + \Dt{}\, A_D$.
\end{proof}

%--- SECTION 4 EULER ----------------------------------------
\section{Conditioning of the implicit Euler integration}\label{sec:euler}

In the following we estimate the conditioning of $M + \Dt{}\, A$ (in the $l_2$-norm), which is related to the efficient iterative solution of \cref{eq:fem:2}.

%--- section 4.1 ---------------
\subsection{%
   \texorpdfstring{Condition number of $M + \Dt{}\, A$}
   {Condition of the system matrix}}\label{sec:condition:euler}

\begin{theorem}\label{thm:euler:1}
The condition number of $M + \Dt{}\, A$ is bounded by
\begin{equation}
   \kappa(M + \Dt{}\, A)
      \lesssim \frac{\max\limits_j  \frac{\abs{\omega_j}}{\abs{\omega_{\min}}} \left (1
      + \Dt{}\, \hD^{-2}
      \sum\limits_{K \in \omega_j} \frac{\Abs{K}}{\Abs{\omega_j}} {\left(\frac{\hD}{\aKD} \right)}^2\right )}
      {1 +  \Dt{}\, d_{\min} N^{-1} \Abs{\omega_{\min}}^{-1} \EnonU^{-1}} .
   \label{eq:cond:1}
\end{equation}
\end{theorem}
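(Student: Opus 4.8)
The plan is to exploit that $M + \Dt\, A$ is symmetric positive definite (since $M$ and $A$ are symmetric positive definite and $\Dt > 0$), so that $\kappa(M + \Dt\, A) = \lambda_{\max}(M + \Dt\, A)\,/\,\lambda_{\min}(M + \Dt\, A)$, and then to bound the numerator from above and the denominator from below by assembling the eigenvalue estimates of \cref{lem:M-maxmin,lem:A-maxmin} specialized to $P = I_{N_{vi}}$.

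For the numerator I would use Weyl's subadditivity $\lambda_{\max}(M + \Dt\, A) \le \lambda_{\max}(M) + \Dt\, \lambda_{\max}(A)$, then insert $\lambda_{\max}(M) \le \Abs{\omega_{\max}}/(d+1)$ and $\lambda_{\max}(A) \lesssim \hD^{-2}\max_{j}\sum_{K\in\omega_j}\Abs{K}(\hD/\aKD)^2$ from the two lemmas. Writing $\Abs{\omega_{\max}} = \max_j \Abs{\omega_j}$ and using $\max_j a_j + \max_j b_j \le 2\max_j(a_j + b_j)$ for nonnegative $a_j,b_j$, these two terms combine, up to the generic constant, into $\max_j\Abs{\omega_j}\bigl(1 + \Dt\, \hD^{-2}\sum_{K\in\omega_j}\tfrac{\Abs{K}}{\Abs{\omega_j}}(\hD/\aKD)^2\bigr)$, which is exactly the numerator of \cref{eq:cond:1}.

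For the denominator I would apply the complementary Weyl inequality $\lambda_{\min}(M + \Dt\, A) \ge \lambda_{\min}(M) + \Dt\, \lambda_{\min}(A)$ and then plug in $\lambda_{\min}(M) \ge \Abs{\omega_{\min}}/((d+1)(d+2))$ and $\lambda_{\min}(A) \gtrsim d_{\min} N^{-1}\EnonU^{-1}$, obtaining $\lambda_{\min}(M + \Dt\, A) \gtrsim \Abs{\omega_{\min}} + \Dt\, d_{\min} N^{-1}\EnonU^{-1} = \Abs{\omega_{\min}}\bigl(1 + \Dt\, d_{\min} N^{-1}\Abs{\omega_{\min}}^{-1}\EnonU^{-1}\bigr)$. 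Taking the ratio of the two bounds and cancelling one factor of $\Abs{\omega_{\min}}$ against $\Abs{\omega_j}$ in the numerator yields \cref{eq:cond:1}. I do not expect a genuine obstacle here: the whole argument is a routine recombination of the preliminary lemmas, and the only points needing care are the bookkeeping of the two maxima (the harmless factor $2$ absorbed into $\lesssim$) and recording at the outset that positive definiteness of $M + \Dt\, A$ justifies identifying $\kappa$ with the eigenvalue ratio.
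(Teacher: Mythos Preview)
Your proposal is correct and follows essentially the paper's proof; the lower-bound argument is identical, and for the upper bound the paper reaches the single maximum over $j$ slightly more directly by first passing to the diagonal via the matrix inequality $M + \Dt\, A \le (d+1)(M_D + \Dt\, A_D)$ (from \cref{eq:M,eq:A}) and then reading off $\max_j(M_{jj}+\Dt\, A_{jj})$, rather than splitting with Weyl and recombining two separate maxima. Either route yields the same bound up to the absorbed constant.
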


\begin{proof}
\Cref{eq:M,eq:A} yield
\[
   M + \Dt{}\, A
   \leq \frac{d+2}{2} M_D  + \Dt{}\, (d+1) A_D
   \le (d+1)  (M_D + \Dt{}\, A_D)
   .
\]
From this and \cref{eq:M,eq:A:3} we obtain
\[
   \lambda_{\max}(M + \Dt{}\, A)
   %\leq (d+1) \max_j \left(M_{jj}+ \Dt{}\, A_{jj} \right)
   \lesssim \max\limits_j
   \left( \Abs{\omega_j}
   + \Dt{}\, \hD^{-2} \sum\limits_{K \in \omega_j} \Abs{K} {\left(\frac{\hD}{\aKD} \right)}^2 \right)
   .
\]

On the other hand, using $ \lambda_{\min}(M + \Dt{}\, A) \ge \lambda_{\min}(M) + \Dt{}\, \lambda_{\min}(A)$
and \cref{lem:M-maxmin,lem:A-maxmin} we have
\[
   \lambda_{\min}(M + \Dt{}\, A)
   \gtrsim \Abs{\omega_{\min}} +  \Dt{}\, d_{\min} N^{-1} \EnonU^{-1}.
\]
Combining the above results yields \cref{eq:cond:1}.
\end{proof}

There are three factors influencing bound \cref{eq:cond:1}.
The first factor is the number of the mesh elements $N$.
The second factor is
\begin{equation}
   \sum\limits_{K \in \omega_j} \frac{\Abs{K}}{\Abs{\omega_j}}  {\left( \frac{\hD}{\aKD} \right)}^2,
   \label{eq:dnonuniformity:factor}
\end{equation}
which reflects the mesh nonuniformity in the metric $\D^{-1}$ and is a constant for a $\D^{-1}$-uniform mesh, for which $\aKD \sim \hD$ for all $K$.
The third factor is the effect of the mesh nonuniformity (in the Euclidean metric) reflected by $\EnonU$, ${\Abs{\omega_j}}/{\Abs{\omega_{\min}}}$, and $N \Abs{\omega_{\min}}$,
which all become constants if the mesh is uniform.

The time step size $\Dt{}$ plays the role of a homotopy parameter between the mass and the stiffness matrices:
\begin{equation}
   \kappa(M + \Dt{}\, A)
   ~ \xrightarrow{\Dt{}\, \to 0} ~
      \kappa(M) \lesssim \frac{\Abs{\omega_{\max}}}{\Abs{\omega_{\min}}} .
   \label{eq:cond:1a}
\end{equation}
Thus, $\kappa(M)$ depends on the nonuniformity of the element patch volumes.
On the other hand,
\begin{align}
   \kappa(M + \Dt{}\, A)
   ~ \xrightarrow{\Dt{}\, \to \infty} ~
       \kappa(A)
   \lesssim d_{\min}^{-1} \hD^{-2} \EnonU
   \max\limits_j \left (N \sum\limits_{K \in \omega_j}
   \abs{K} {\left(\frac{\hD}{\aKD}\right)}^2 \right)
   ,
  \label{eq:cond:1b}
\end{align}
which has been obtained previously~\cite[Theorem 5.2]{KamHuaXu14}.
It depends on three factors as well: $N$ (through $\hD$), the mesh nonuniformity in the Euclidean metric (through $\EnonU$), and the mesh nonuniformity with respect to $\D^{-1}$ (through the term in the maximum).

%--- section 4.2 -----
\subsection{%
\texorpdfstring{Condition number of $P^{- \frac{1}{2}} (M + \Dt{}\, A)P^{- \frac{1}{2}}$}
   {Condition number of the preconditioned system matrix}}\label{sec:euler:scaled}

\begin{theorem}\label{thm:euler:2}
The condition number of $P^{- \frac{1}{2}} (M + \Dt{}\, A)P^{- \frac{1}{2}}$ with $P = M$, $M_D$, or $M_{lump}$ is bounded by
\begin{equation}
   \kappa(P^{-\frac{1}{2}} (M + \Dt{}\, A) P^{-\frac{1}{2}})
   \lesssim \frac{1 + \Dt{}\, \hD^{-2}
      \max\limits_j  \sum\limits_{K \in \omega_j} \frac{\abs{K}}{\Abs{\omega_j}}
      {\left(\frac{\hD}{\aKD} \right)}^2}
   { 1 + \Dt{}\, \lambda_{\D}}
   ,
   \label{eq:cond:2}
\end{equation}
where $\lambda_{\D}$ is the minimal eigenvalue of $-\nabla \cdot (\D \nabla )$.
\end{theorem}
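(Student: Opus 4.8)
The plan is to bound the condition number of $P^{-\frac{1}{2}}(M+\Dt\,A)P^{-\frac{1}{2}}$ from above by the ratio of an upper bound on its largest eigenvalue to a lower bound on its smallest eigenvalue, exploiting the fact that $M$, $A$, and $P$ are all symmetric positive (semi-)definite so that we can work entirely with Rayleigh quotients. First I would write
\[
   \lambda_{\max}\bigl(P^{-\frac{1}{2}}(M+\Dt\,A)P^{-\frac{1}{2}}\bigr)
   \le \lambda_{\max}(P^{-\frac{1}{2}}MP^{-\frac{1}{2}})
      + \Dt\,\lambda_{\max}(P^{-\frac{1}{2}}AP^{-\frac{1}{2}})
\]
and the analogous lower bound with $\lambda_{\min}$, both of which follow from the additivity of the Rayleigh quotient for a sum of symmetric matrices. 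Then I would simply substitute the bounds already established in \cref{lem:M-maxmin,lem:A-maxmin} for the three preconditioner choices $P=M$, $M_D$, $M_{lump}$.

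Carrying this out: for the numerator, \cref{M-max} gives $\lambda_{\max}(P^{-\frac{1}{2}}MP^{-\frac{1}{2}}) \le 1+\tfrac{d}{2}$ (a constant, absorbed into $C$), and \cref{eq:A:max} gives $\lambda_{\max}(P^{-\frac{1}{2}}AP^{-\frac{1}{2}}) \lesssim \hD^{-2}\max_j \sum_{K\in\omega_j}\frac{\abs{K}}{\Abs{\omega_j}}\bigl(\frac{\hD}{\aKD}\bigr)^2$, so the numerator is $\lesssim 1 + \Dt\,\hD^{-2}\max_j\sum_{K\in\omega_j}\frac{\abs{K}}{\Abs{\omega_j}}\bigl(\frac{\hD}{\aKD}\bigr)^2$. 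For the denominator, \cref{M-min} gives $\lambda_{\min}(P^{-\frac{1}{2}}MP^{-\frac{1}{2}}) \gtrsim 1$ (again a constant for these three choices), and \cref{eq:A:min} gives $\lambda_{\min}(P^{-\frac{1}{2}}AP^{-\frac{1}{2}}) \gtrsim \lambda_{\D}$, so the denominator is $\gtrsim 1 + \Dt\,\lambda_{\D}$. Dividing yields exactly \cref{eq:cond:2}.

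I do not expect any serious obstacle here: the theorem is essentially a bookkeeping corollary of \cref{lem:M-maxmin,lem:A-maxmin}, and the only point requiring a word of care is that the subadditivity/superadditivity of extremal eigenvalues under matrix addition must be applied to the \emph{congruence-transformed} matrices $P^{-\frac{1}{2}}MP^{-\frac{1}{2}}$ and $P^{-\frac{1}{2}}AP^{-\frac{1}{2}}$ rather than to $M$ and $A$ themselves — but since congruence by $P^{-\frac{1}{2}}$ preserves symmetry and positive semidefiniteness, this is immediate. The mildly delicate accounting is that the constant lower bound $\lambda_{\min}(P^{-\frac{1}{2}}MP^{-\frac{1}{2}})\gtrsim 1$ holds uniformly for $P=M,M_D,M_{lump}$ but \emph{not} for $P=M_D+\Dt\,A_D$ (whose bound in \cref{M-min} degrades), which is precisely why this theorem is restricted to the three mass-type preconditioners and the $M_D+\Dt\,A_D$ case is treated separately.
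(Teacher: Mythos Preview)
Your proposal is correct and follows essentially the same approach as the paper: split the extremal eigenvalues of $P^{-\frac{1}{2}}(M+\Dt\,A)P^{-\frac{1}{2}}$ via the sub-/super-additivity inequalities for $\lambda_{\max}$ and $\lambda_{\min}$, then plug in the bounds from \cref{lem:M-maxmin,lem:A-maxmin}. Your additional remarks on why the argument requires the congruence-transformed matrices and why $P=M_D+\Dt\,A_D$ must be handled separately are accurate and go slightly beyond what the paper spells out.
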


\begin{proof}
Bound \cref{eq:cond:2} is obtained by combining \cref{lem:M-maxmin,lem:A-maxmin} with the estimates
\begin{align*}
& \lambda_{\max}\left ( P^{-\frac{1}{2}} (M + \Dt{}\, A) P^{-\frac{1}{2}} \right )
\le \lambda_{\max} \left ( P^{-\frac{1}{2}} M P^{-\frac{1}{2}} \right )
+ \Dt{}\, \lambda_{\max} \left ( P^{-\frac{1}{2}} A P^{-\frac{1}{2}} \right ),
\\
& \lambda_{\min}\left ( P^{-\frac{1}{2}} (M + \Dt{}\, A) P^{-\frac{1}{2}} \right )
\ge \lambda_{\min} \left ( P^{-\frac{1}{2}} M P^{-\frac{1}{2}} \right )
+ \Dt{}\, \lambda_{\min} \left ( P^{-\frac{1}{2}} A P^{-\frac{1}{2}} \right )
.
\qedhere{}
\end{align*}
\end{proof}

The bounds on $\kappa(P^{-\frac{1}{2}} (M + \Dt{}\, A) P^{-\frac{1}{2}})$ for $P = M$, $M_D$, and $M_{lump}$ are similar, while the last two choices lead to a simple diagonal scaling, which is easier to implement.

All three choices reduce the effects of the mesh nonuniformity: in comparison to \cref{eq:cond:1}, bound \cref{eq:cond:2} does not depend on $\Abs{\omega_{\min}}$ or $\EnonU$ directly and contains only the $\D^{-1}$-nonuniformity factor \cref{eq:dnonuniformity:factor}.
This is intuitive, since the eigenvalues of $M^{-1} A$ approximate those of the underlying continuous operator, which are mesh-independent.
However, $\kappa(M^{-1} A)$ is not necessarily smaller then $\kappa(A)$ and the overall effect depends on the magnitude of $\Dt{}$: if $\Dt{}$ is large, $\kappa(M^{-1}(M + \Dt{} A))$ might not be better than $\kappa(M + \Dt{} A)$.
On the other hand, if $\Dt{}$ is small, we can expect that $\kappa(M^{-1}(M + \Dt{} A)) < \kappa(M + \Dt{} A)$.
The numerical experiments in \cref{sec:numerics} support this argument (see \cref{ex:scaling,fig:scaling:MD}).

%--- Theorem: implicit Euler condition scaled
\begin{theorem}\label{thm:euler:3}
The condition number of $P^{-\frac{1}{2}} (M + \Dt{}\, A) P^{-\frac{1}{2}}$
with the Jacobi preconditioner
$P = M_D + \Dt{}\, A_D$ is bounded by
\begin{align}
    \kappa(P^{-\frac{1}{2}} (M + \Dt{}\, A) P^{-\frac{1}{2}})
   \lesssim
   {\left(
      \frac{1}{1 + \Dt{}\, \hD^{-2} \max\limits_j \sum\limits_{K \in \omega_j}
         \frac{\Abs{K}}{\Abs{\omega_j}} {\left(\frac{\hD}{\aKD}\right)}^2}
      + \frac{d_{\min}}{\DnonU} \right)}^{-1}
   \label{eq:cond:3}
\end{align}
where $\DnonU$ is given in \cref{eq:beta-1}.
\end{theorem}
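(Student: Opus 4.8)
The plan is to mimic the proof of \cref{thm:euler:2} almost verbatim, only now invoking the $P=M_D+\Dt A_D$ branches of \cref{lem:M-maxmin} and \cref{lem:A-maxmin}. Since $A$, $M$, hence $M+\Dt A$ are symmetric and $P$ is a positive diagonal matrix, $P^{-\frac12}(M+\Dt A)P^{-\frac12}$ is symmetric positive definite and $\kappa=\lambda_{\max}/\lambda_{\min}$, so I would estimate the two extreme eigenvalues separately, using super-/sub-additivity of $\lambda_{\max}$ and $\lambda_{\min}$ under matrix addition --- the two inequalities displayed at the end of the proof of \cref{thm:euler:2}, with $X=P^{-\frac12}MP^{-\frac12}$ and $Y=\Dt\,P^{-\frac12}AP^{-\frac12}$. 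All of the geometric content is already packed into \cref{lem:M-maxmin,lem:A-maxmin}; what is left is assembly.

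For the largest eigenvalue, \cref{M-max} gives $\lambda_{\max}(P^{-\frac12}MP^{-\frac12})\le 1+\tfrac d2$ and \cref{eq:A:max} gives $\lambda_{\max}(P^{-\frac12}AP^{-\frac12})\le(d+1)\Dt{}^{-1}$, so
\[
   \lambda_{\max}\bigl(P^{-\frac12}(M+\Dt A)P^{-\frac12}\bigr)
   \;\le\; \Bigl(1+\tfrac d2\Bigr)+\Dt\,(d+1)\Dt{}^{-1}
   \;=\; 2+\tfrac{3d}{2},
\]
a constant depending only on $d$. (Equivalently, as in the proof of \cref{thm:euler:1}, $M+\Dt A\le\tfrac{d+2}{2}M_D+(d+1)\Dt A_D\le(d+1)P$.) The point worth noting is the exact cancellation of $\Dt$ against the $\Dt{}^{-1}$ in \cref{eq:A:max}: $\lambda_{\max}$ then carries neither mesh- nor $\Dt$-dependence, so all such dependence of $\kappa$ is pushed into the denominator of \cref{eq:cond:3}.

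For the smallest eigenvalue I would write
\[
   \lambda_{\min}\bigl(P^{-\frac12}(M+\Dt A)P^{-\frac12}\bigr)
   \;\ge\; \lambda_{\min}\bigl(P^{-\frac12}MP^{-\frac12}\bigr)
      +\Dt\,\lambda_{\min}\bigl(P^{-\frac12}AP^{-\frac12}\bigr).
\]
By \cref{M-min} the first term is $\gtrsim\bigl(1+\Dt\,\hD^{-2}\max_j\sum_{K\in\omega_j}\tfrac{\Abs{K}}{\Abs{\omega_j}}(\hD/\aKD)^2\bigr)^{-1}$, which is precisely the first summand inside the parentheses of \cref{eq:cond:3}; by \cref{eq:A:min} the second term is $\gtrsim\Dt\,d_{\min}\DnonU^{-1}$, furnishing the $d_{\min}/\DnonU$ summand. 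Dividing the constant upper bound by this lower bound and absorbing the generic constants into one $\lesssim$ then yields \cref{eq:cond:3}.

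I expect no genuine obstacle in the theorem itself: it is pure bookkeeping once \cref{lem:M-maxmin,lem:A-maxmin} are in hand, the substantive estimate being the lower bound $\lambda_{\min}(P^{-\frac12}AP^{-\frac12})\gtrsim d_{\min}\DnonU^{-1}$ proved in \cref{lem:A-maxmin} by the dimension-by-dimension H\"older/interpolation argument of \cite[Lemma~5.1]{KamHuaXu14}, with the weights $s_K$ carrying the factor $1+\Dt\,r_K$, $r_K=\aKD^{-2}$. If I had to name a point of care, it is checking that the $\lambda_{\max}$ bound is genuinely $\Dt$-independent --- this uses that \cref{eq:A:max} for $P=M_D+\Dt A_D$ is the mesh-independent $(d+1)\Dt{}^{-1}$, not the mesh-dependent bounds there for the other preconditioners --- and correctly tracking the single $\Dt$ multiplying the $A$-contribution to $\lambda_{\min}$.
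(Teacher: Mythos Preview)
Your approach is exactly what the paper intends: its entire proof of \cref{thm:euler:3} reads ``The proof is similar to that of \cref{thm:euler:2}'', and you have correctly spelled that out by invoking the $P=M_D+\Dt A_D$ cases of \cref{lem:M-maxmin,lem:A-maxmin} together with the two eigenvalue inequalities displayed at the end of the proof of \cref{thm:euler:2}. One point worth flagging: what your argument actually produces for the second summand is $\Dt\, d_{\min}/\DnonU$, not $d_{\min}/\DnonU$ as printed in \cref{eq:cond:3}; the missing factor of $\Dt$ appears to be a typo in the stated bound, since the paper's own $\Dt\to\infty$ discussion immediately after the theorem---recovering a finite limit for $\kappa(A_D^{-1/2}AA_D^{-1/2})$---only makes sense with that extra $\Dt$ present.
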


\begin{proof}
The proof is similar to that of \cref{thm:euler:2}.
\end{proof}

Bound \cref{eq:cond:3} is comparable to \cref{eq:cond:2} although the former is smaller than the latter in general, especially for large $\Dt{}$, since the factor $\DnonU$ in \cref{eq:cond:3} involves averaging over all elements, whereas \cref{eq:cond:2} involves the maximum over patch averages.

In \cref{eq:cond:3}, $\Dt{}$ is a homotopy parameter between the mass and stiffness matrices:
\[
   \kappa(P^{-\frac{1}{2}} (M + \Dt{}\, A) P^{-\frac{1}{2}})
   ~ \xrightarrow{\Dt{}\, \to 0} ~
   \kappa (M_D^{-\frac{1}{2}} M M_D^{-\frac{1}{2}}) \le C .
\]
On the other hand,
\begin{multline}
   \kappa(P^{-\frac{1}{2}} (M + \Dt{}\, A) P^{-\frac{1}{2}})
    \xrightarrow{\Dt{}\, \to \infty}
   \kappa (A_D^{-\frac{1}{2}} A A_D^{-\frac{1}{2}})
   \notag
   \\
   \lesssim \frac{N^{\frac{2}{d}}}{d_{\min} \Abs{\Omega}_{\D^{-1}}^{\frac{2}{d}}}
   \times \begin{cases}
      \sum\limits_{K} \Abs{K}  {\left( \frac{\hD}{\aKD} \right)}^2,
      &d = 1,
      \\
      {\left( 1+ \Abs{\ln \nonUinLog}\right) \sum\limits_{K} \Abs{K}  \left (\frac{\hD}{\aKD} \right)}^2,
      &d=2,
      \\
      {\left(\sum\limits_{K} \Abs{K} {\left( \frac{\hD}{\aKD} \right)}^d \right)}^{\frac{2}{d}},
      &d\geq3,
   \end{cases}
   \label{eq:cond:4}
\end{multline}
where $\nonUinLog$ becomes
\[
   \nonUinLog =
   \frac{\max\limits_{K} {\left( \frac{\hD}{\aKD} \right)}^2}
   {\sum\limits_{K} \frac{\Abs{K}}{\Abs{\Omega}} {\left( \frac{\hD}{\aKD} \right)}^2}
   .
\]
This bound is equivalent to the bound obtained in~\cite[Theorem 5.2]{KamHuaXu14}.

\begin{remark}\label{rem:scaling}
In comparison to the bound \cref{eq:cond:1} for $M + \Dt{}\, A$, bounds \cref{eq:cond:2,eq:cond:3} contain only two mesh-dependent factors: $N$ and the mesh nonuniformity in the metric $\D^{-1}$ (through the terms involving the ratio $\hD/\aKD$).
This shows that the effects of the mesh nonuniformity (in the Euclidean metric) on the condition number is effectively eliminated by the preconditioning.
\end{remark}

\begin{remark}\label{rem:dt:h}
For a $\D^{-1}$-uniform mesh, $\aKD \sim \hD$ and, hence, all terms involving the ratio $\hD/\aKD$ will become a constant.
Thus,
\[
   \kappa(P^{-\frac{1}{2}} (M + \Dt{}\, A) P^{-\frac{1}{2}})  = \mathcal{O}(1+\Dt{}\, N^{\frac{2}{d}}),
\]
which shows more clearly the role of $\Dt{}$:
\[
   \kappa(P^{-\frac{1}{2}} (M + \Dt{}\, A) P^{-\frac{1}{2}})  =
   \begin{cases}
      \mathcal{O}(N^{\frac{2}{d}}), &\Dt{}\, = \mathcal{O}(1),
      \\
      \mathcal{O}(N^{\frac{1}{d}}), &\Dt{}\, = \mathcal{O}(\hD) = \mathcal{O}(N^{-\frac{1}{d}}),
      \\
      \mathcal{O}(1), &\Dt{}\, = \mathcal{O}(N^{-\frac{2}{d}})
      \text{ or $\Dt{}\, \to 0$}.
   \end{cases}
\]
\end{remark}

%--- SECTION 5 GENERAL IRK ----------------------------------------
\section{Conditioning of general implicit RK integration}\label{sec:irk}

For a general implicit RK method, the matrix $\A$ in \cref{eq:rk:3} is not necessarily normal
and in this case a condition number in the standard definition does not provide much
information for the convergence of iterative methods. When its symmetric part, $(\mathbb{A} + \mathbb{A}^T)/2$,
is positive definite, it is known~\cite{EisElmSch83} that the convergence of the generalized minimal residual
method (GMRES) is
\begin{equation}
\| r_n \| \le {\left(1 - \frac{\lambda_{\min}^2((\mathbb{A} + \mathbb{A}^T)/2)}{\sigma_{\max}^2(\mathbb{A})}
\right)}^{n/2} \| r _0\|,
\label{Eisenstat-1}
\end{equation}
where $\sigma_{\max}(\mathbb{A})$ is the largest singular value of $\mathbb{A}$,
$\lambda_{\min}((\mathbb{A} + \mathbb{A}^T)/2)$
is the minimal eigenvalue of the symmetric part, $r_n$ is the residual
of the corresponding linear system at the $n$-th iterate, and $\| \cdot \|$ stands for the matrix or vector 2-norm.
Thus, we can consider the ``condition number''
\begin{equation}
   \tilde{\kappa}(\mathbb{A}) = \frac{\sigma_{\max}(\mathbb{A})}{\lambda_{\min}((\mathbb{A} + \mathbb{A}^T)/2)} .
   \label{cond-0}
\end{equation}
This definition reduces to the standard definition of the condition number (in 2-norm)
for symmetric matrices.

We first recall that for an arbitrary square matrix $U$, its maximal singular value is given by
\[
   \sigma_{\max}(U) = \max_{\bv \neq 0} \frac{ \norm{ U \bv }}{\norm{ \bv }} .
\]
From this, for any square matrices $U$ and $V$ we have
\begin{equation}
   \sigma_{\max}(U+V) \le \sigma_{\max}(U) + \sigma_{\max}(V) .
   \label{sv-1}
\end{equation}

The next lemma provides the eigenvalues and singular values of the Kronecker product of any square matrices $U$ and $V$.

\begin{lemma}[{\cite[Theorems 13.10 and 13.12]{Lau05}}]\label{lem:sv:2}
For any square matrices $U$ and $V$, the eigenvalues and singular values of $U\otimes V$ are $\lambda_j(U)\lambda_k(V)$ and $\sigma_j(U) \sigma_k(V)$, $j,k = 1, 2, \dotso$
\end{lemma}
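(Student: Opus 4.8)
The plan is to route everything through unitary triangularization (Schur form) and the singular value decomposition, using only two elementary properties of the Kronecker product: the mixed-product rule $(A\otimes B)(C\otimes D)=(AC)\otimes(BD)$ and the adjoint rule $(A\otimes B)^{\ast}=A^{\ast}\otimes B^{\ast}$. Together these imply that the Kronecker product of two unitary matrices is unitary. I would also record the two structural facts that the Kronecker product of upper triangular matrices is upper triangular (the subdiagonal blocks $u_{ij}V$, $i>j$, vanish and the diagonal blocks $u_{ii}V$ are themselves upper triangular), with diagonal entries $u_{ii}v_{jj}$, and that the Kronecker product of (nonnegative) diagonal matrices is (nonnegative) diagonal.

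For the eigenvalue claim, take Schur decompositions $U=Q_1T_1Q_1^{\ast}$ and $V=Q_2T_2Q_2^{\ast}$ with $Q_1,Q_2$ unitary and $T_1,T_2$ upper triangular whose diagonals list the eigenvalues of $U$ and $V$. The mixed-product rule gives
\[
  U\otimes V=(Q_1\otimes Q_2)(T_1\otimes T_2)(Q_1\otimes Q_2)^{\ast},
\]
and since $Q_1\otimes Q_2$ is unitary and $T_1\otimes T_2$ is upper triangular, this is a Schur decomposition of $U\otimes V$. Hence the eigenvalues of $U\otimes V$ are exactly the diagonal entries of $T_1\otimes T_2$, i.e.\ the products $\lambda_j(U)\lambda_k(V)$ over all index pairs $(j,k)$. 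For the singular value claim, argue identically with SVDs $U=W_1\Sigma_1Z_1^{\ast}$ and $V=W_2\Sigma_2Z_2^{\ast}$: then
\[
  U\otimes V=(W_1\otimes W_2)(\Sigma_1\otimes\Sigma_2)(Z_1\otimes Z_2)^{\ast},
\]
where $W_1\otimes W_2$ and $Z_1\otimes Z_2$ are unitary and $\Sigma_1\otimes\Sigma_2$ is diagonal with nonnegative entries $\sigma_j(U)\sigma_k(V)$, so this is an SVD of $U\otimes V$ and its singular values are the products $\sigma_j(U)\sigma_k(V)$.

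The only point requiring care — the main (mild) obstacle — is that the naive ``tensor product of eigenvectors'' argument, while it quickly produces the eigenvalues $\lambda_j(U)\lambda_k(V)$ as \emph{some} eigenvalues of $U\otimes V$, does not by itself handle non-diagonalizable $U$ or $V$ nor confirm multiplicities; passing through the Schur form and the SVD, both of which exist for arbitrary square matrices, circumvents this entirely and needs no extra hypotheses. This is precisely the content of the cited results in Laub's text, so no further work is needed.
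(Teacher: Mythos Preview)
Your argument is correct and complete: the Schur-form route for eigenvalues and the SVD route for singular values, both via the mixed-product identity $(A\otimes B)(C\otimes D)=(AC)\otimes(BD)$, are exactly the standard proofs and handle non-diagonalizable matrices and multiplicities without extra work. The paper itself gives no proof of this lemma at all --- it simply cites Theorems~13.10 and~13.12 of Laub's text --- so there is nothing to compare your approach against beyond noting that you have supplied the argument the paper omits.
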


The above lemma in particular implies
\begin{equation*}
   \sigma_{\max}(U\otimes V) = \sigma_{\max}(U) \sigma_{\max}(V)
   \label{eq:kronSigmaMax}
\end{equation*}
and, for a positive semi-definite matrix $V$,
\begin{equation}
   \lambda_{\min}(U\otimes V) = \lambda_{\min}(U) \times
      \begin{cases}
         \lambda_{\min}(V), & \text{if $\lambda_{\min}(U) \ge 0$},\\
         \lambda_{\max}(V), & \text{if $\lambda_{\min}(U) < 0$}.
      \end{cases}
   \label{eq:kronLmin}
\end{equation}

% section 5.1
\subsection{General implicit RK methods: simultaneous solution}\label{sec:irk-1}

Although a successive strategy for implicit RK methods is more common (e.g.,~\cite[p.~131]{HaiWan96}), the simultaneous solution can be appealing if an iterative method is used.
It is also important to have a theoretical understanding of the overall system conditioning.

\begin{theorem}\label{thm:irk:1}
Let $P$ be a symmetric and positive definite preconditioner, $\Gamma$ an $s \times s$ implicit RK coefficient matrix, and $\A = I_s \otimes M + \Dt \, \Gamma A$.

If $\lambda_{\min}((\Gamma + \Gamma^T)/2) < 0$ and $\Dt$ is chosen sufficiently small such that
\begin{equation}
   \lambda_{\min}(P^{-\frac{1}{2}} M P^{-\frac{1}{2}}) 
      + \Dt \lambda_{\min}((\Gamma+\Gamma^T)/2)
            \lambda_{\max}(P^{-\frac{1}{2}} A P^{-\frac{1}{2}}) > 0,
 \label{dt-0}
\end{equation}
then
\begin{multline}
   \tilde{\kappa} \left( (I_s\otimes P^{-\frac{1}{2}}) \A
      (I_s\otimes P^{-\frac{1}{2}}) \right)
   \\
   \le
   \frac{\lambda_{\max}(P^{-\frac{1}{2}} M P^{-\frac{1}{2}})
            + \Dt{}\, \sigma_{\max}(\Gamma)
            \lambda_{\max}(P^{-\frac{1}{2}} A P^{-\frac{1}{2}})}
        {\lambda_{\min}(P^{-\frac{1}{2}} M P^{-\frac{1}{2}}) 
             + \Dt \lambda_{\min}((\Gamma+\Gamma^T)/2)
             \lambda_{\max}(P^{-\frac{1}{2}} A P^{-\frac{1}{2}})
       } .
   \label{eq:irk:1:1}
\end{multline}

If $\lambda_{\min}((\Gamma + \Gamma^T)/2) \ge 0$, then
\begin{multline}
   \tilde{\kappa} \left( (I_s\otimes P^{-\frac{1}{2}}) \A
      (I_s\otimes P^{-\frac{1}{2}}) \right)
   \\
   \le \frac{\lambda_{\max}(P^{-\frac{1}{2}} M P^{-\frac{1}{2}}) + \Dt{}\, \sigma_{\max}(\Gamma)
            \lambda_{\max}(P^{-\frac{1}{2}} A P^{-\frac{1}{2}})}
         {\lambda_{\min}(P^{-\frac{1}{2}} M P^{-\frac{1}{2}}) 
            + \Dt \lambda_{\min}((\Gamma+\Gamma^T)/2)
            \lambda_{\min}(P^{-\frac{1}{2}} A P^{-\frac{1}{2}})}
        .
   \label{eq:irk:1:0}
\end{multline}

\end{theorem}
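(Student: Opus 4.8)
The plan is to diagonally scale the block system, reduce everything to Kronecker‑product identities, and then invoke subadditivity of $\sigma_{\max}$ and of $\lambda_{\min}$ on symmetric matrices. Write $\hat{M} = P^{-1/2} M P^{-1/2}$ and $\hat{A} = P^{-1/2} A P^{-1/2}$; both are symmetric, $\hat{M}$ is positive definite, and $\hat{A}$ is positive semidefinite (in fact positive definite, since $A$ is the stiffness matrix of a coercive form). Using the mixed‑product property of $\otimes$, the scaled matrix is
$\tilde{\A} := (I_s\otimes P^{-1/2}) \A (I_s\otimes P^{-1/2}) = I_s\otimes \hat{M} + \Dt\, \Gamma\otimes \hat{A}$
with $\A$ as in \cref{eq:rk:3}. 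Since $\hat{M}$ and $\hat{A}$ are symmetric, $(\Gamma\otimes\hat{A})^T = \Gamma^T\otimes\hat{A}$, so the symmetric part of $\tilde{\A}$ is exactly $I_s\otimes\hat{M} + \Dt\, \Gamma_s\otimes\hat{A}$ with $\Gamma_s := (\Gamma+\Gamma^T)/2$.

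For the numerator of $\tilde{\kappa}$, I would bound $\sigma_{\max}(\tilde{\A}) \le \sigma_{\max}(I_s\otimes\hat{M}) + \Dt\,\sigma_{\max}(\Gamma\otimes\hat{A})$ by \cref{sv-1}, and then evaluate each Kronecker factor via \cref{lem:sv:2}: $\sigma_{\max}(I_s\otimes\hat{M}) = \lambda_{\max}(\hat{M})$ and $\sigma_{\max}(\Gamma\otimes\hat{A}) = \sigma_{\max}(\Gamma)\lambda_{\max}(\hat{A})$, where the singular values of the symmetric positive (semi)definite matrices $\hat{M}$, $\hat{A}$ coincide with their eigenvalues. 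This produces the common numerator appearing in both \cref{eq:irk:1:1} and \cref{eq:irk:1:0}.

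For the denominator I would apply $\lambda_{\min}(X+Y) \ge \lambda_{\min}(X) + \lambda_{\min}(Y)$ for symmetric $X,Y$ to get $\lambda_{\min}(I_s\otimes\hat{M} + \Dt\,\Gamma_s\otimes\hat{A}) \ge \lambda_{\min}(\hat{M}) + \Dt\,\lambda_{\min}(\Gamma_s\otimes\hat{A})$, and then use \cref{eq:kronLmin} with $U=\Gamma_s$ (symmetric, hence real spectrum) and $V=\hat{A}$ (positive semidefinite): when $\lambda_{\min}(\Gamma_s)\ge 0$ the Kronecker minimum equals $\lambda_{\min}(\Gamma_s)\lambda_{\min}(\hat{A})$, and when $\lambda_{\min}(\Gamma_s)<0$ it equals $\lambda_{\min}(\Gamma_s)\lambda_{\max}(\hat{A})$. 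In the first case the resulting lower bound $\lambda_{\min}(\hat{M}) + \Dt\,\lambda_{\min}(\Gamma_s)\lambda_{\min}(\hat{A})$ is automatically positive, so the symmetric part is positive definite and dividing the numerator bound by it yields \cref{eq:irk:1:0}; in the second case the hypothesis \cref{dt-0} is precisely $\lambda_{\min}(\hat{M}) + \Dt\,\lambda_{\min}(\Gamma_s)\lambda_{\max}(\hat{A}) > 0$, which makes the symmetric part positive definite and gives \cref{eq:irk:1:1}.

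The computation is essentially bookkeeping once the Kronecker identities are set up; the one point that needs care is the sign split in \cref{eq:kronLmin} — recognizing that a negative eigenvalue of $\Gamma_s$ is multiplied by the \emph{largest}, not the smallest, eigenvalue of $\hat{A}$ — which is exactly why the time‑step restriction \cref{dt-0} is forced when $\lambda_{\min}((\Gamma+\Gamma^T)/2)<0$ and why the denominator in that case carries $\lambda_{\max}(P^{-1/2}AP^{-1/2})$ rather than $\lambda_{\min}$. A secondary point is that $\tilde{\kappa}$ in \cref{cond-0} is only meaningful once the symmetric part is positive definite, so the two cases must be arranged so that this positivity is established before the ratio is formed; I expect this organizational care, rather than any analytic difficulty, to be the main thing to get right.
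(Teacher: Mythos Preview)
Your proposal is correct and follows essentially the same approach as the paper: rewrite the preconditioned block matrix via the mixed-product property, bound $\sigma_{\max}$ using \cref{sv-1} and \cref{lem:sv:2}, identify the symmetric part, and bound its minimum eigenvalue via Weyl's inequality together with the sign-dependent identity \cref{eq:kronLmin}. The organization and the handling of the two cases match the paper's proof exactly.
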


\begin{proof}
First,  notice that
\[
   (I_s\otimes P^{-\frac{1}{2}}) (I_s\otimes M+\Dt{}\, \Gamma \otimes A) (I_s\otimes P^{-\frac{1}{2}})
   = I_s\otimes (P^{-\frac{1}{2}}MP^{-\frac{1}{2}}) +\Dt{}\, \Gamma \otimes (P^{-\frac{1}{2}}AP^{-\frac{1}{2}}) .
\]
Then, from \cref{sv-1,lem:sv:2} we have
\begin{align}
& \sigma_{\max}\left ((I_s\otimes P^{-\frac{1}{2}}) (I_s\otimes M+\Dt{}\, \Gamma \otimes A) (I_s\otimes P^{-\frac{1}{2}}) \right )
\notag
\\
& \quad\qquad \le \sigma_{\max}\left ( I_s\otimes (P^{-\frac{1}{2}}MP^{-\frac{1}{2}}) \right )
+ \Dt{}\, \sigma_{\max}\left ( \Gamma \otimes (P^{-\frac{1}{2}}AP^{-\frac{1}{2}}) \right )
\notag
\\
& \quad\qquad = \sigma_{\max}\left ( P^{-\frac{1}{2}}MP^{-\frac{1}{2}} \right )
+ \Dt{}\, \sigma_{\max}\left ( \Gamma \right ) \sigma_{\max}\left ( P^{-\frac{1}{2}}AP^{-\frac{1}{2}} \right )
\notag
\\
& \quad\qquad = \lambda_{\max}\left ( P^{-\frac{1}{2}}MP^{-\frac{1}{2}} \right )
+ \Dt{}\, \sigma_{\max}\left ( \Gamma \right ) \lambda_{\max}\left ( P^{-\frac{1}{2}}AP^{-\frac{1}{2}} \right ) .
\label{eq:irk-1-2}
\end{align}
Moreover, the symmetric part of
$(I_s\otimes P^{-\frac{1}{2}}) (I_s\otimes M+\Dt{}\, \Gamma \otimes A) (I_s\otimes P^{-\frac{1}{2}})$ is
\[
I_s\otimes (P^{-\frac{1}{2}}MP^{-\frac{1}{2}}) +\Dt{}\, (\Gamma+\Gamma^T)/2 \otimes (P^{-\frac{1}{2}}AP^{-\frac{1}{2}}).
\]
Then,
\begin{align*}
   & \lambda_{\min}\left (I_s\otimes (P^{-\frac{1}{2}}MP^{-\frac{1}{2}}) +\Dt{}\, (\Gamma+\Gamma^T)/2
   \otimes (P^{-\frac{1}{2}}AP^{-\frac{1}{2}}) \right )
   \\
   &\ge \lambda_{\min}\left (I_s\otimes (P^{-\frac{1}{2}}MP^{-\frac{1}{2}}) \right )
      +\Dt{}\, \lambda_{\min}\left ( (\Gamma+\Gamma^T)/2
      \otimes (P^{-\frac{1}{2}}AP^{-\frac{1}{2}}) \right )
      .
\end{align*}
In case of $\lambda_{\min}((\Gamma + \Gamma^T)/2) \ge 0$ or if $\lambda_{\min}((\Gamma + \Gamma^T)/2) < 0$ and $\Dt$ is chosen such that \cref{dt-0} holds, then the symmetric part of $(I_s\otimes P^{-\frac{1}{2}}) (I_s\otimes M+\Dt{}\, \Gamma \otimes A) (I_s\otimes P^{-\frac{1}{2}})$ is positive definite and \cref{eq:irk:1:0,eq:irk:1:1} follow from the above results and \cref{cond-0,eq:kronLmin}.
\end{proof}

% table 1
\begin{table}[t]\centering{}%
\caption{%
   Eigenvalues and singular values of the implicit RK coefficient matrix $\Gamma$ and the minimum eigenvalue of
   $(\Gamma+\Gamma^T)/2$.}\label{tab:t1}%
\begin{tabular}{lcllll}
   \toprule
   Method         & Order & Eigenvalues     & $\sigma_{\max}$ & $\sigma_{\min}$ &
      $\lambda_{\min}((\Gamma+\Gamma^T)/2)$\\
   \midrule
   Gauss          & 4   & $0.25\pm 0.1443 i$             & 0.6319 & 0.1319 &\phantom{-}0\\
   Gauss          & 6   & 0.2153, $0.1423 \pm 0.1358 i$  & 0.6629 & 0.0635 &-0.0563\\
   Radau IA       & 3   & $0.3333\pm 0.2357 i$           & 0.5    & 0.3333 &\phantom{-}0.25\\
   Radau IIA      & 5   & 0.2749, $0.1626\pm 0.1849 i$   & 0.8023 & 0.0923 &-0.0822\\
   Lobatto IIIA   & 4   & 0, $0.25\pm 0.1443 i$          & 0.7947 & 0      &-0.0736\\
   \bottomrule
\end{tabular}%
\end{table}

\begin{remark}\label{rem:irk:1}
In case of $P = I_{N_{vi}}$, $M$, $M_D$, $M_{lump}$, and $M_D + \Dt{}\, A_D$, the estimates for the eigenvalues $\lambda_{\min}(P^{-\frac{1}{2}} M P^{-\frac{1}{2}})$, $\lambda_{\max}(P^{-\frac{1}{2}} M P^{-\frac{1}{2}})$, $\lambda_{\min}(P^{-\frac{1}{2}} A P^{-\frac{1}{2}})$, and $\lambda_{\max}(P^{-\frac{1}{2}} A P^{-\frac{1}{2}})$ are given by \cref{lem:M-maxmin,lem:A-maxmin}.
The quantities $\sigma_{\max}(\Gamma)$ and $\lambda_{\min}((\Gamma+\Gamma^T)/2)$ are given in \cref{tab:t1} for five implicit Runge--Kutta methods.
One can see that $\sigma_{\max}(\Gamma) = \mathcal{O}(1)$ but $\lambda_{\min}((\Gamma+\Gamma^T)/2)$ can be zero, positive, and negative.

For the 4th-order Gauss and 3rd-order Radau IA methods, $\lambda_{\min}((\Gamma+\Gamma^T)/2)\ge 0$ and \cref{dt-0} poses no constraint on the choice of $\Dt$.
\Cref{eq:irk:1:0} indicates that the conditioning of the system \cref{eq:rk:3} resulting from implicit RK integration behaves more or less like that of $M + \Dt{}\, A$ for the implicit Euler method, and the bound also involves three mesh-dependent factors and one of them can be eliminated effectively by diagonal preconditioning (see \cref{rem:scaling}).

For the other three methods in \cref{tab:t1} with $\lambda_{\min}((\Gamma+\Gamma^T)/2) < 0$, \cref{dt-0} gives an upper bound on possible $\Dt$.
Note that this condition can be serious and lead to a condition like $\Delta t = \mathcal{O}(h^2)$.
\end{remark}

% section 5.2 ----------------------------------------
\subsection{Diagonally implicit RK (DIRK) methods}\label{sec:dirk}

The coefficient matrix $\Gamma = {(\gamma_{kj})}_{k,j=1}^s$ of a DIRK method is a lower triangular matrix and the system \cref{eq:rk:1} is solved by successively solving $s$ linear systems with $ M + \Dt{}\, \gamma_{jj} A$, $j = 1, \dotsc, s$.
The conditioning of $ M + \Dt{}\, \gamma_{jj} A$ is therefore similar to that of $M + \Dt{}\, A$ and, hence, the analysis in \cref{sec:euler} also applies to DIRK methods.

% section 5.3 ----------------------------------------
\subsection{General implicit RK methods: successive solution}\label{sec:irk-2}

A successive solution procedure transforms the large system \cref{eq:rk:1} into a number of smaller systems, which are then solved successively.
We adopt the approach of Butcher~\cite{But76} and Bickart~\cite{Bic77} and carry out the transformation using the Jordan normal form of the RK matrix.
To keep our analysis applicable to methods with a singular $\Gamma$, we use the Jordan normal form of $\Gamma$ instead of $\Gamma^{-1}$, which is the conventional choice (e.g.,\ see~\cite[p.~131]{HaiWan96}).

Let the Jordan normal form of $\Gamma$ be
\begin{equation}
\Gamma = T \begin{bmatrix} J_1 & & \\ & \ddots & \\ & & J_p \end{bmatrix} T^{-1},
\label{Jordan-1}
\end{equation}
where $T$ is a real invertible matrix and $J_j$, $j = 1, \dotsc, p$, are the Jordan blocks, which either have the form
\begin{equation}
   J_j = \begin{bmatrix} \mu_j & 1 & & \\ & \mu_j & \ddots & \\ & & \ddots & 1 \\
   & & & \mu_j \end{bmatrix}
   ,
   \quad \mu_j \in \R,
   \label{eq:Jordan:2}
\end{equation}
or
\begin{equation}
   J_j = \begin{bmatrix} C_j & I_2 & & \\ & C_j & \ddots & \\ & & \ddots & I_2 \\
   & & & C_j \end{bmatrix}
   \quad \text{with} \quad
   C_j = \begin{bmatrix} \alpha_j & \beta_j \\ - \beta_j & \alpha_j \end{bmatrix}
   ,
   \quad \alpha_j,\beta_j \in \R
   .
\label{eq:Jordan:3}
\end{equation}
Recall that the eigenvalues of $\Gamma$ are assumed to have nonnegative real parts, i.e., $\mu_j, \alpha_j \ge 0$.

\begin{theorem}\label{thm:irk:2}
Assume that the eigenvalues of the coefficient matrix $\Gamma$ for a given implicit RK method have nonnegative real parts and the system \cref{eq:rk:1} is solved by using the Jordan normal form of $\Gamma$ to transform it into smaller systems.
Then, the conditioning of implicit RK integration of \cref{eq:fem} is determined by the conditioning of
\begin{equation}
   M + \mu_j \Dt{}\, A
   \quad \text{and} \quad
   I_2\otimes M + \Dt{}\, C_j \otimes A,
   \label{eq:irk-2-1}
\end{equation}
where $\mu_j$ and $\alpha_j \pm i \beta_j$ are the real and complex eigenvalues of $\Gamma$, respectively.

Moreover, for any symmetric and positive definite preconditioner $P$,
\begin{multline}
   \tilde{\kappa} \left( (I_2\otimes P^{-\frac{1}{2}}) (I_2\otimes M+\Dt{}\, C_j \otimes A) (I_2\otimes P^{-\frac{1}{2}})\right)
   \\
   \le \frac{\lambda_{\max}(P^{-\frac{1}{2}} M P^{-\frac{1}{2}})
      + \Dt{}\, \sqrt{\alpha_j^2 + \beta_j^2} \, \lambda_{\max}(P^{-\frac{1}{2}} A P^{-\frac{1}{2}})}
   {\lambda_{\min}(P^{-\frac{1}{2}} M P^{-\frac{1}{2}})
              + \Dt\, \alpha_j \, \lambda_{\min}(P^{-\frac{1}{2}} A P^{-\frac{1}{2}})}
   .
   \label{eq:irk-2-2}
\end{multline}
\end{theorem}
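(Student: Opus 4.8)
The plan is to reduce the coupled system \cref{eq:rk:1}, whose matrix is $\A = I_s\otimes M + \Dt\,\Gamma\otimes A$, to a family of block-triangular subsystems via the real Jordan form \cref{Jordan-1}, and then to bound $\tilde\kappa$ for the resulting complex diagonal blocks directly, using only \cref{sv-1,lem:sv:2,cond-0} together with the scalar eigenvalue estimates already available from \cref{lem:M-maxmin,lem:A-maxmin}.

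First I would apply the similarity transformation $T^{-1}\otimes I_{N_{vi}}$. Since $T^{-1}\Gamma T$ is the block-diagonal matrix with Jordan blocks $J_1,\dots,J_p$,
\[
   (T^{-1}\otimes I_{N_{vi}})\,\A\,(T\otimes I_{N_{vi}})
   = I_s\otimes M + \Dt\,(T^{-1}\Gamma T)\otimes A ,
\]
which is block diagonal with blocks $I\otimes M + \Dt\, J_j\otimes A$. This is the Butcher--Bickart reduction, and since $T$ is a fixed real invertible matrix independent of $\D$, $\Dt$, and the mesh, solving \cref{eq:rk:1} reduces to solving each such subsystem. For a real Jordan block \cref{eq:Jordan:2} the subsystem matrix is block upper bidiagonal with $M+\mu_j\Dt\, A$ on the block diagonal and $\Dt\, A$ on the block superdiagonal; for a complex Jordan block \cref{eq:Jordan:3} it is block upper bidiagonal with $I_2\otimes M+\Dt\, C_j\otimes A$ on the block diagonal and $\Dt\,(I_2\otimes A)$ on the block superdiagonal. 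In both cases a block back-substitution reduces every solve to one with a matrix from \cref{eq:irk-2-1}, which proves the first assertion; the matrix $M+\mu_j\Dt\, A$ is symmetric positive definite, so its conditioning is covered by the analysis of \cref{sec:euler} with $\Dt$ replaced by $\mu_j\Dt$.

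It remains to prove \cref{eq:irk-2-2}. Abbreviating $\tilde M = P^{-\frac{1}{2}}MP^{-\frac{1}{2}}$ and $\tilde A = P^{-\frac{1}{2}}AP^{-\frac{1}{2}}$, both symmetric positive definite, I would start from
\[
   (I_2\otimes P^{-\frac{1}{2}})(I_2\otimes M+\Dt\, C_j\otimes A)(I_2\otimes P^{-\frac{1}{2}})
   = I_2\otimes\tilde M + \Dt\, C_j\otimes\tilde A .
\]
For the numerator, \cref{sv-1,lem:sv:2} give $\sigma_{\max}\le\sigma_{\max}(\tilde M)+\Dt\,\sigma_{\max}(C_j)\,\sigma_{\max}(\tilde A)$; since $C_j^T C_j=(\alpha_j^2+\beta_j^2)I_2$ one has $\sigma_{\max}(C_j)=\sqrt{\alpha_j^2+\beta_j^2}$, while $\sigma_{\max}(\tilde M)=\lambda_{\max}(\tilde M)$ and $\sigma_{\max}(\tilde A)=\lambda_{\max}(\tilde A)$ by symmetry. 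For the denominator, the symmetric part of $I_2\otimes\tilde M + \Dt\, C_j\otimes\tilde A$ equals $I_2\otimes\tilde M + \Dt\,\tfrac{C_j+C_j^T}{2}\otimes\tilde A = I_2\otimes(\tilde M+\Dt\,\alpha_j\tilde A)$ because $\tfrac{C_j+C_j^T}{2}=\alpha_j I_2$; as $\alpha_j\ge0$ this is symmetric positive definite, so $\tilde\kappa$ is well defined with no restriction on $\Dt$, and its smallest eigenvalue is $\lambda_{\min}(\tilde M+\Dt\,\alpha_j\tilde A)\ge\lambda_{\min}(\tilde M)+\Dt\,\alpha_j\lambda_{\min}(\tilde A)$. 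Inserting the two bounds into the definition \cref{cond-0} yields \cref{eq:irk-2-2}, and the inputs $\lambda_{\min}(\tilde M)$, $\lambda_{\max}(\tilde M)$, $\lambda_{\min}(\tilde A)$, $\lambda_{\max}(\tilde A)$ for $P=I_{N_{vi}}, M, M_D, M_{lump}, M_D+\Dt\, A_D$ come from \cref{lem:M-maxmin,lem:A-maxmin}.

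The algebraic computations here are short; the step needing the most care is the first assertion, namely making precise what ``the conditioning is determined by'' means — that the Butcher--Bickart change of variables is a fixed transformation independent of the mesh, $\D$, and $\Dt$, and that the block upper bidiagonal structure of each Jordan-block subsystem lets a block back-substitution reduce every linear solve in the successive procedure to a solve with $M+\mu_j\Dt\, A$ or with $I_2\otimes M+\Dt\, C_j\otimes A$. The factor-of-two enlargement of the complex blocks relative to the implicit Euler case is then visible directly in the bound \cref{eq:irk-2-2}, where the numerator carries $\sqrt{\alpha_j^2+\beta_j^2}$ while the denominator carries only $\alpha_j$.
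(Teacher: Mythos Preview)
Your proposal is correct and follows essentially the same approach as the paper: the Butcher--Bickart reduction via the real Jordan form of $\Gamma$, block back-substitution through the resulting block upper bidiagonal subsystems, and the same singular-value/symmetric-part estimate for the $2\times2$ complex blocks using $\sigma_{\max}(C_j)=\sqrt{\alpha_j^2+\beta_j^2}$ and $(C_j+C_j^T)/2=\alpha_j I_2$. Your treatment of the denominator is in fact slightly cleaner than the paper's, since by observing that the symmetric part equals $I_2\otimes(\tilde M+\Dt\,\alpha_j\tilde A)$ exactly you avoid re-invoking \cref{lem:sv:2} and \cref{eq:kronLmin} there.
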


\begin{proof}
Using \cref{Jordan-1}, we can rewrite \cref{eq:rk:3} as
\begin{align*}
   \mathcal{A} &= \left(T\otimes I_{N_{vi}}\right)
   \left( I_s\otimes M + \Dt{}\, \begin{bmatrix} J_1 & & \\ & \ddots & \\ & & J_p \end{bmatrix} \otimes A \right)
   \left(T^{-1} \otimes I_{N_{vi}}\right)
   \\
   &= \left(T\otimes I_{N_{vi}}\right)
   \begin{bmatrix} I_{n_1}\otimes M + \Dt{}\, J_1\otimes A & & \\ & \ddots & \\ & &
      I_{n_p}\otimes M + \Dt{}\, J_p \otimes A \end{bmatrix}
      \left(T^{-1} \otimes I_{N_{vi}}\right),
\end{align*}
where $n_j$ is the size of $J_j$ (with $\sum_{j=1}^p n_j = s$).
Since the systems with the coefficient matrix $T\otimes I_{N_{vi}}$ or $T^{-1} \otimes I_{N_{vi}}$ can be solved directly and efficiently, we only need to consider the iterative solution of the systems associated with
\begin{equation}
   I_{n_j}\otimes M + \Dt{}\, J_j\otimes A, \quad j = 1, \dotsc, p.
   \label{rk-4}
\end{equation}

For a Jordan block in the form \cref{eq:Jordan:2}, the above matrix becomes
\[
I_{n_j}\otimes M + \Dt{}\, J_j\otimes A
= \begin{bmatrix} M + \mu_j \Dt{}\, A & \Dt{}\, A & & \\ & M + \mu_j \Dt{}\, A & \ddots & \\
& & \ddots & \Dt{}\, A \\
& & & M + \mu_j \Dt{}\, A \end{bmatrix} ,
\]
which can be solved by successively solving $n_j$ systems with $ M + \mu_j \Dt{}\, A$ (backward substitution).

On the other hand, for a Jordan block in the form \cref{eq:Jordan:3}, the matrix \cref{rk-4} becomes
\begin{multline*}
I_{n_j}\otimes M + \Dt{}\, J_j \otimes A
   \\
   = \begin{bmatrix} I_2\otimes M + \Dt{}\, C_j \otimes A & \Dt{}\, I_2\otimes A & & \\
   & I_2\otimes M + \Dt{}\, C_j \otimes A & \ddots & \\
   & & \ddots & \Dt{}\, I_2\otimes A \\
   & & & I_2\otimes M + \Dt{}\, C_j \otimes A \end{bmatrix}
   ,
\end{multline*}
which, again, can be solved by successively solving $n_j/2$ systems with $I_2\otimes M + \Dt{}\, C_j \otimes A$.
Hence, the conditioning in this case is determined by the matrices in \cref{eq:irk-2-1}.

The analysis of \cref{sec:euler} can be used to estimate the condition number of $ M + \mu_j \Dt{}\, A$.
Estimate \cref{eq:irk-2-2} for the second matrix in \cref{eq:irk-2-1} is obtained similarly as for \cref{thm:irk:1},
except that we now have $\sigma_{\max}(C_j) = \sqrt{\alpha_j^2 + \beta_j^2}$ and $\lambda_{\min}((C_j+C_j^T)/2) = \alpha_j$.
\end{proof}

Note that \cref{rem:irk:1} applies to the current situation as well.
However, estimate \cref{eq:irk-2-2} can now be used for all methods in \cref{tab:t1} \emph{without any restriction on $\Dt$}.

%%%%%%%%%%%%%%%%%%%%%%%%%%%%%%%%%%%%%%%%%%%%%%%%%%%%%%%%%%%%
%--- SECTION 6 NUMERICS ----------------------------------------
\section{Numerical examples}\label{sec:numerics}
In the following examples we consider the IBVP~\cref{eq:IBVP} in 2d ($d = 2$) with $\Omega = {(0,1)}^2$ and homogeneous Dirichlet boundary conditions.
For the time integration we choose Radau~IA method of order~3 with fixed time steps
$\Dt{}\, = 10^{-1},\; 10^{-3},\; 10^{-5}$, and $\Dt{}\, = N^{-\frac{1}{d}} \sim h$, where $h$ is the average mesh size.
The \RKmethod{} coefficient matrix is given by
\[
   \Gamma = \frac{1}{12}
   \begin{pmatrix*}[r]
      3 & -3\\
      3 &  5
   \end{pmatrix*}
   ~
   \text{%
      with $\sigma_{\max}(\Gamma) = 0.5$
      and $\lambda_{\min}((\Gamma + \Gamma^T)/2) = 0.25$.%
   }
\]

\begin{example}\label{ex:radau5}
To compare the condition number $\tilde\kappa( I_s\otimes M + \Dt{}\, \Gamma \otimes A)$ of \RKmethod{} with the condition number $\kappa(M + \Dt{}\, A)$ of the implicit Euler method we consider two diffusion matrices: isotropic
\[
   \D = I \quad \text{(Laplace operator)}
\]
and anisotropic
\begin{equation}
   \D(x,y) =
   \begin{bmatrix}
      \cos \theta   & - \sin \theta\\
      \sin \theta   &   \cos \theta
   \end{bmatrix}
   \begin{bmatrix}
     10  & 0 \\
     0   & 0.1
   \end{bmatrix}
   \begin{bmatrix}
        \cos \theta  & \sin \theta\\
      - \sin \theta  & \cos \theta
   \end{bmatrix},
   \quad
   \theta = \pi \sin x \cos y
   ,
   \label{eq:anisotropic:D}
\end{equation}
with quasi-uniform (\cref{fig:meshes:uni}) and $\D^{-1}$-uniform (\cref{fig:meshes:duni}) meshes obtained using the mesh generator \textsl{bamg}~\cite{bamg}.
A quasi-uniform mesh can be also seen as $\D^{-1}$-uniform for $\D = I$.
\begin{figure}[t]\centering{}%
   \subcaptionbox{quasi-uniform\label{fig:meshes:uni}}
   [0.31\linewidth]{\centering{}%
      \includegraphics[width=0.31\linewidth, clip]{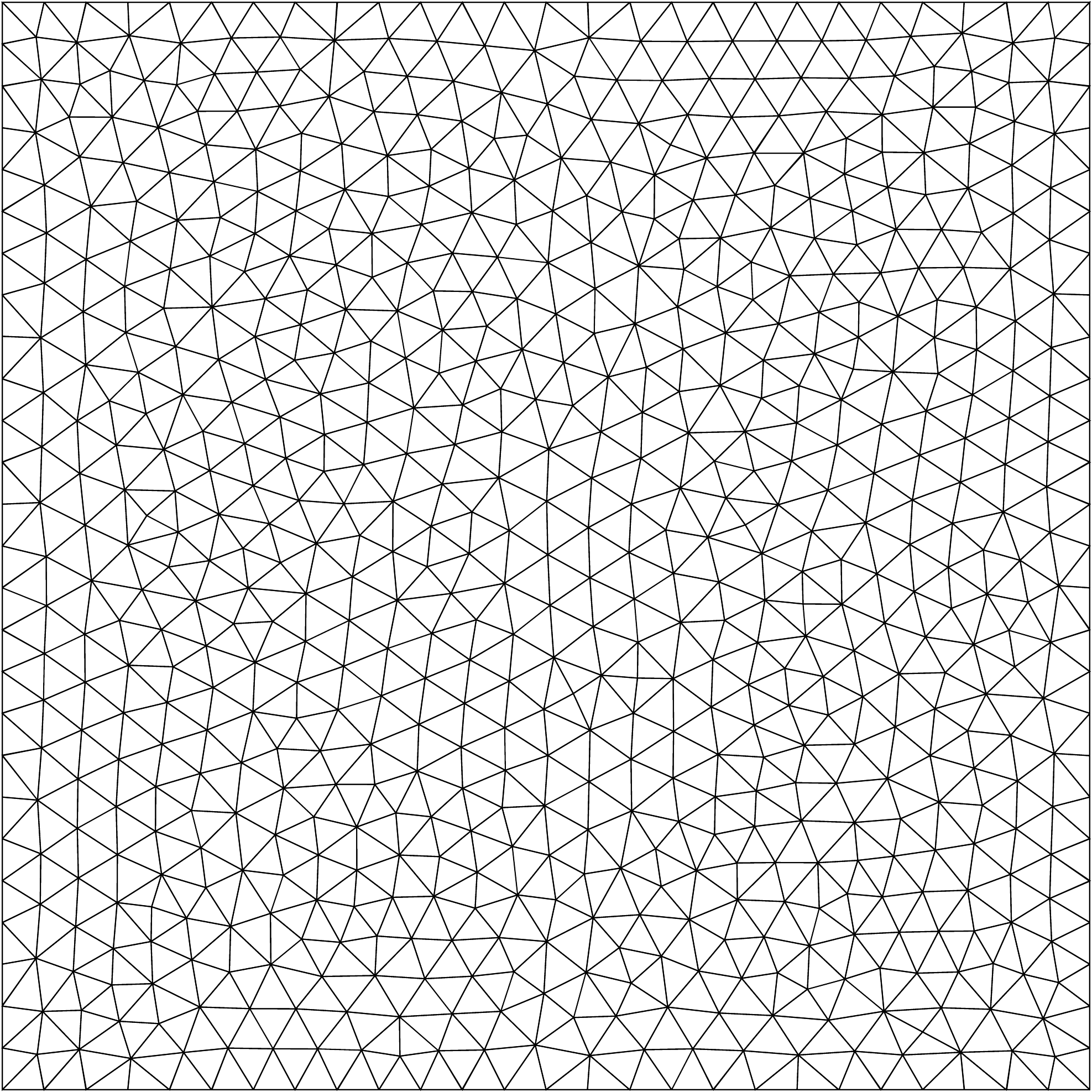}%
   }%
   \hfill{}%
   \subcaptionbox{$\D^{-1}$-uniform for \cref{eq:anisotropic:D}\label{fig:meshes:duni}}
      [0.31\linewidth]{\centering{}%
         \includegraphics[width=0.31\linewidth, clip]{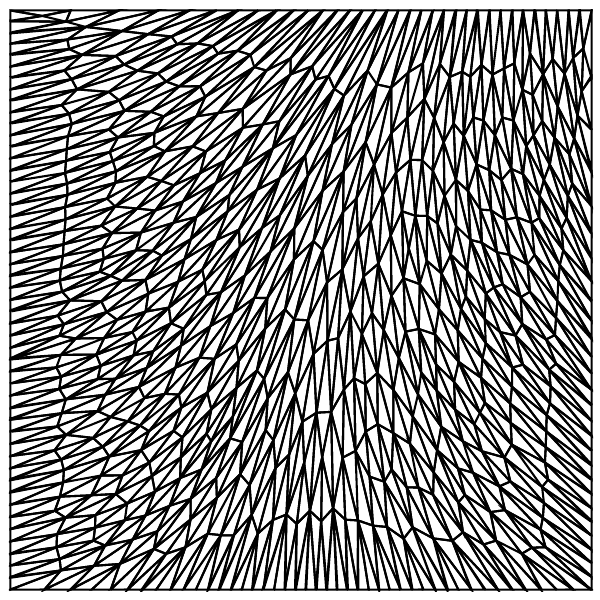}%
   }%
   \hfill{}%
   \subcaptionbox{adaptive for \cref{eq:tanh}\label{fig:meshes:adaptive}}
      [0.31\linewidth]{\centering{}%
         \includegraphics[width=0.31\linewidth, clip]{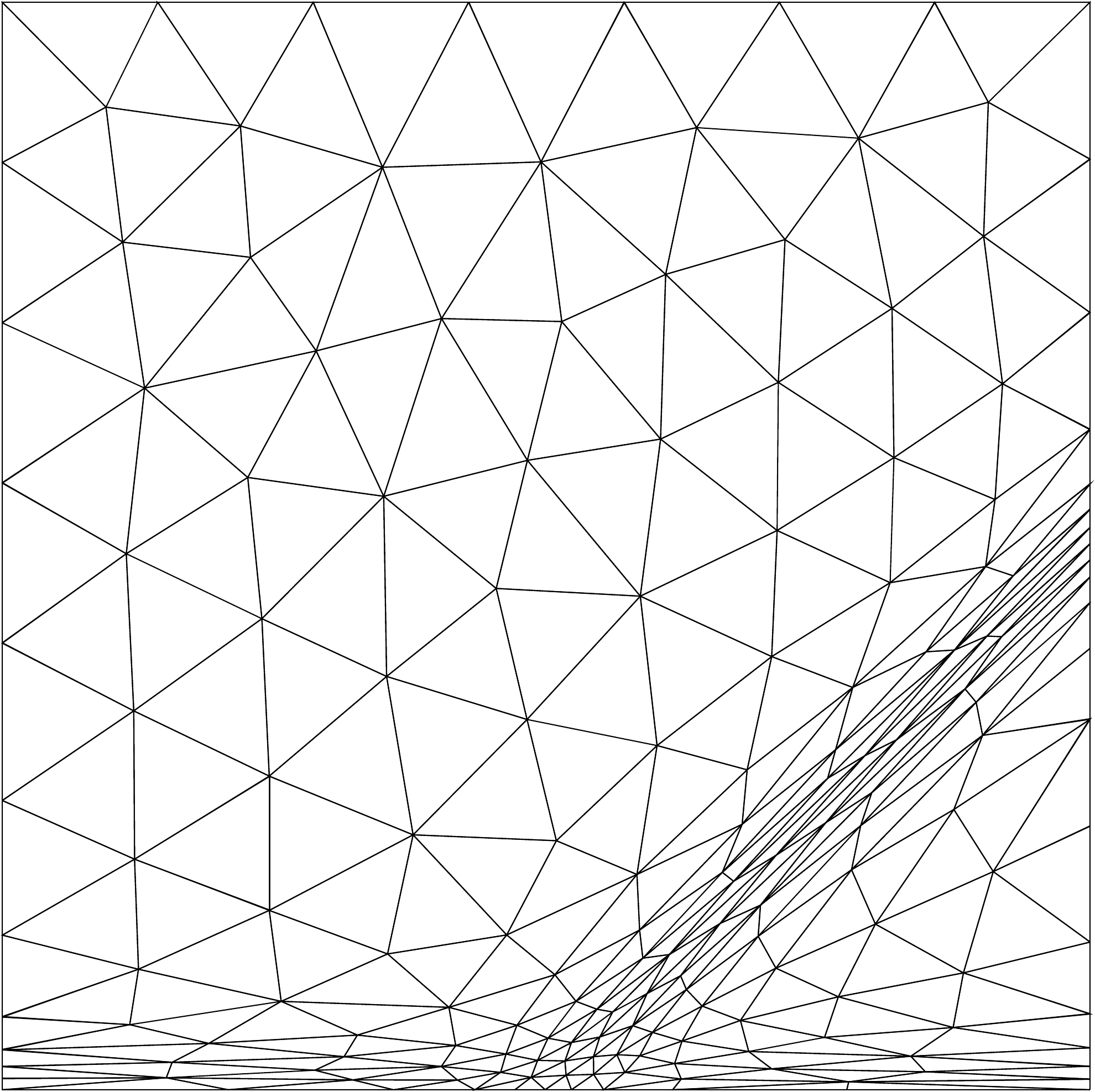}%
   }%
   \caption{Mesh examples.}\label{fig:meshes}%
\end{figure}

We expect the conditioning of \RKmethod{} to be similar to that of the implicit Euler (see \cref{eq:irk:1:1,rem:irk:1}).
This is verified in \cref{fig:kappa} (left column), which shows that the conditioning for the \RKmethod{} method has the same general behaviour as that for the implicit Euler scheme.
With increasing number of mesh elements, the order of conditioning is $\mathcal{O}(N^{\frac{2}{d}})$ for a fixed $\Dt{}$ and becomes $\mathcal{O}(N^{\frac{1}{d}})$ for $\Dt{}\, \sim h \sim N^{-\frac{1}{d}}$ for both methods, which is in perfect agreement with the theoretical prediction in \cref{rem:dt:h}.

Adapting towards the diffusion matrix improves the conditioning of the stiffness matrix~\cite{KamHuaXu14}.
This effect is observed in our test case as well: for anisotropic diffusion, the conditioning for diffusion-adapted meshes (\cref{fig:D10}) is noticeably smaller than that with quasi-uniform meshes (\cref{fig:D7}), especially as $N$ is getting larger.

To compare the exact values for the conditioning of \RKmethod{} with the bound \cref{eq:irk:1:1} in \cref{thm:irk:1}, we use the exact eigenvalues for the matrices $P^{-1/2}MP^{-1/2}$ and $P^{-1/2}AP^{-1/2}$.
\Cref{fig:kappa} (right column) shows that the bound \cref{eq:irk:1:0} is greater than the exact condition number but exhibits essentially the same behaviour as $N$ increases.

%--- Radau: comparison to the Euler method and the estimates ------------
% pc dt N euler radau5 esti-radau5
% 0  1  2 3      4          5
\begin{figure}[p]%
   \begin{subfigure}[t]{1.0\textwidth}%
      \plotRadauTEuler{0}{i-r1a3-ama1001-07}{4}{3}{ \RKshort}{ Euler}{o}{square}{title = \RKmethod{} vs.\ implicit Euler,}%
      \hfill{}%
      \plotRadauTEuler{0}{i-r1a3-ama1001-07}{4}{5}{ \RKshort}{ bound \cref{eq:irk:1:0}}{o}{*}{title = \RKmethod{} vs.\ bound \cref{eq:irk:1:0}, ytick pos=right,}%
      \caption{quasi-uniform grids (\cref{fig:meshes:uni})
         and the Laplace operator ($\D = I$)}\label{fig:I7}%
   \end{subfigure}%
   \\[\baselineskip]%
   \begin{subfigure}[t]{1.0\textwidth}%
      \plotRadauTEuler{0}{d-r1a3-ama1001-07}{4}{3}{ \RKshort}{ Euler}{o}{square}{}%
      \hfill{}%
      \plotRadauTEuler{0}{d-r1a3-ama1001-07}{4}{5}{ \RKshort}{ bound \cref{eq:irk:1:0}}{o}{*}{ytick pos=right,}%
      \caption{quasi-uniform grids (\cref{fig:meshes:uni})
         and the anisotropic $\D$ from \cref{eq:anisotropic:D}}\label{fig:D7}%
   \end{subfigure}%
   \\[\baselineskip]%
   \begin{subfigure}[t]{1.0\textwidth}%
      \plotRadauTEuler{0}{d-r1a3-ama1001-10}{4}{3}{ \RKshort}{ Euler}{o}{square}{}%
      \hfill{}%
      \plotRadauTEuler{0}{d-r1a3-ama1001-10}{4}{5}{ \RKshort}{ bound \cref{eq:irk:1:0}}{o}{*}{ytick pos=right,}%
      \caption{$\D^{-1}$-uniform grids (\cref{fig:meshes:duni})
         for the anisotropic $\D$ from \cref{eq:anisotropic:D}}\label{fig:D10}%
   \end{subfigure}%
   \caption{%
      Conditioning for \RKmethod{} (\RKshort) and implicit Euler (left)
      and a comparison of \RKmethod{} conditioning with the bound \cref{eq:irk:1:0} (right)
      as functions of $N$ (\cref{ex:radau5}).%
   }\label{fig:kappa}%
\end{figure}
\end{example}

\begin{example}\label{ex:scaling}
To illustrate the effect of the diagonal scaling, we consider $\D=I$ (Laplace operator), $P = M_D + \Dt{}\, A_D$,  $M_D$ and $M_{lump}$, and adaptive anisotropic meshes for the interpolation of the function
\begin{equation}
   u(x, y) = \tanh (60y) - \tanh (60 (x - y - 0.5)),
   \quad (x, y) \in {(0,1)}^2
   ,
   \label{eq:tanh}
\end{equation}
generated with \textsl{bamg}~\cite{bamg} using an anisotropic adaptive metric~\cite{Huang06}.
This function simulates the interaction between a boundary layer along the $x$-axis and a shock wave along the line $x = y + 0.5$ (see \cref{fig:meshes:adaptive} for a mesh example).

Consistently with \cref{rem:scaling}, \cref{fig:scaling:MA} shows that the diagonal scaling with $P = M_D + \Dt{}\, A_D$ (Euler) and $I_s\otimes M_D + \Dt{}\, \Gamma \otimes A_D$ (\RKmethod{}) reduces the effects caused by the mesh nonuniformity: the conditioning of both Euler and \RKmethod{} methods is reduced by a factor ranging from 3 ($\Dt = 10^{-1}$)
to 8 ($\Dt = 10^{-3},\; 10^{-5}$).

For the scaling with $P = M_D$, \cref{fig:scaling:MD} shows that the conditioning is getting worse for large $\Dt{}$ ($\Dt = 10^{-1}$) but is improving for small $\Dt{}$ ($\Dt = 10^{-3}$, $\Dt = 10^{-5}$).
To explain this, we recall that the diagonal entries of $M$ are proportional to the corresponding patch volumes (see \cref{lem:M:1}).
Hence, scaling with $\M_D$ improves the conditioning issues caused by the mesh \emph{volume-nonuniformity} (the same applies to $M$ and $M_{lump}$).
On the other hand, $\max_j A_{jj} \le \lambda_{\max}(A) \le (d+1) \max_j A_{jj}$~\cite[Lemma 4.1]{KamHuaXu14} and \cref{lem:A:2} imply that the largest eigenvalue of $A$ depends on the element shape (see also~\cite[section 4.1]{KamHuaXu14} and~\cite[section 3]{She02}).
Thus, $\kappa(M^{-1} A)$ is not necessarily smaller than $\kappa(A)$ since rescaling of $A$ with respect to the patch volumes does not necessarily improve the conditioning issues caused by the shape.
Therefore, in general, the overall effect depends on the magnitude of $\Dt{}$: $\kappa(I + \Dt{} M^{-1} A)$ is not necessarily better than $\kappa(M + \Dt{} A)$ for large $\Dt{}$, but we can expect that $\kappa(I + \Dt{} M^{-1} A) < \kappa(M + \Dt{} A)$ for small $\Dt{}$.
%\cref{sec:euler:scaled}

%--- Unscaled vs. scaled ------------
% columns: pc dt N euler radau5 esti-radau5
%          0  1  2 3     4     5
% 1 vs 2 = precond1, 3 = test case, 4 = column, 5 vs 6 = name, 7 vs 8 = marker, 9 = title
\begin{figure}[t]
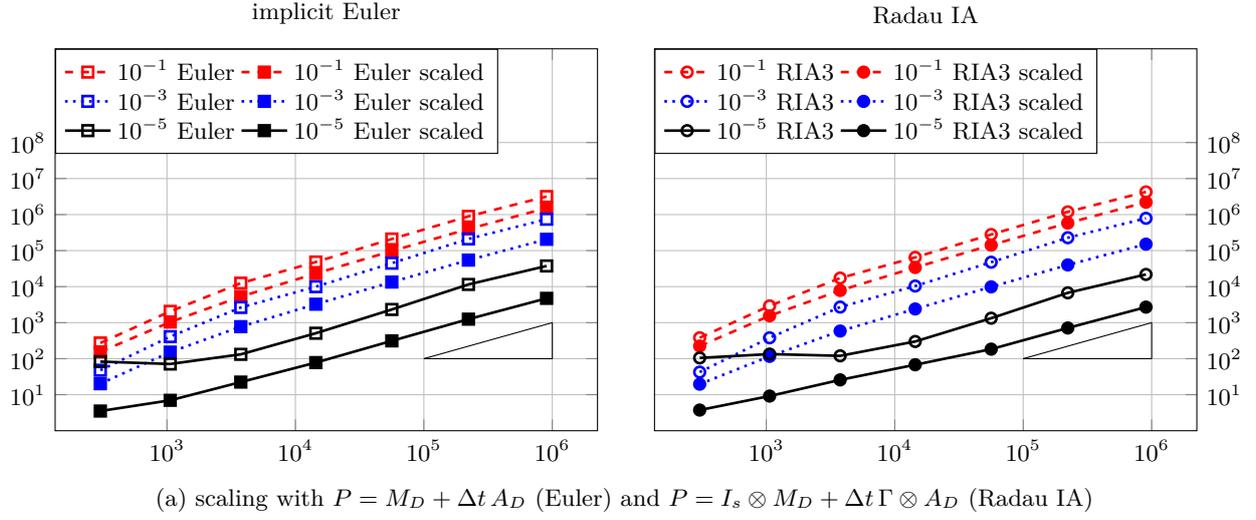
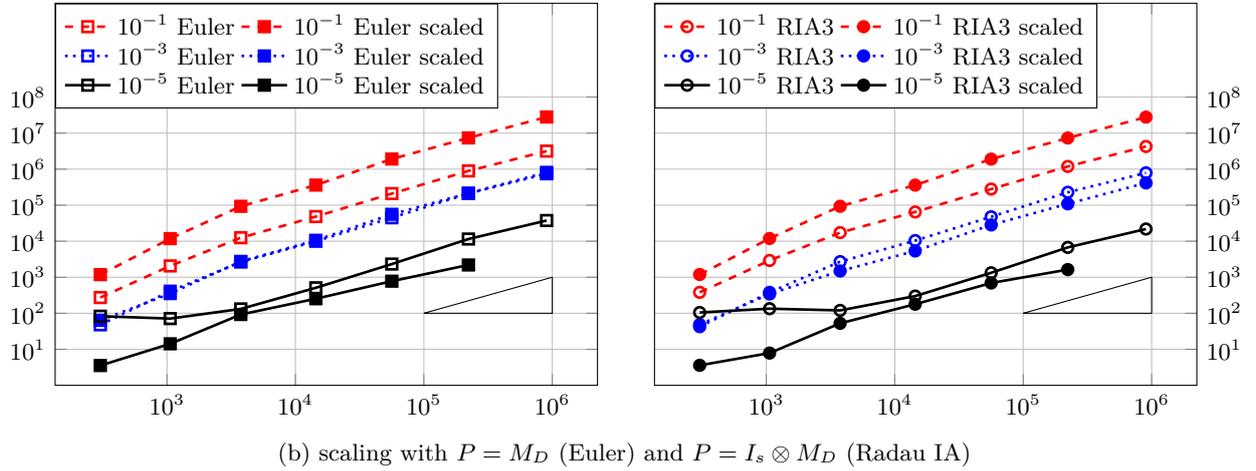

   \begin{subfigure}[t]{1.0\textwidth}%
      \plotScalingE{0}{1}{i-r1a3-tanh30}{3}{ Euler}{ Euler scaled}{square}{square*}{title = implicit Euler,}%
      \hfill{}%
      \plotScalingE{0}{1}{i-r1a3-tanh30}{4}{ \RKshort}{ \RKshort~scaled}{o}{*}{title = \RKmethod{}, ytick pos=right,}%
      \caption{scaling with $P = M_D + \Dt{}\, A_D$ (Euler)
         and $P = I_s\otimes M_D + \Dt{}\, \Gamma \otimes A_D$ (\RKmethod{})}\label{fig:scaling:MA}
   \end{subfigure}%
   \\[\baselineskip]%
   \begin{subfigure}[t]{1.0\textwidth}%
      \plotScalingE{0}{1MD}{i-r1a3-tanh30}{3}{ Euler}{ Euler scaled}{square}{square*}{}%
      \hfill{}%
      \plotScalingE{0}{1MD}{i-r1a3-tanh30}{4}{ \RKshort}{ \RKshort~scaled}{o}{*}{ytick pos=right,}%
      \caption{scaling with $P = M_D$ (Euler) and $P = I_s\otimes M_D$ (\RKmethod{})}\label{fig:scaling:MD}
   \end{subfigure}%
    \caption{%
      Conditioning of implicit Euler and \RKmethod{} (\RKshort) as function of $N$
      before and after a diagonal scaling
      for the Laplace operator and adaptive anisotropic meshes (\cref{ex:scaling}).%
   }\label{fig:scaling}%
\end{figure}
\end{example}

%--- SECTION 7 ----------------------------------------
\section{Conclusions}\label{sec:conclusion}

For the FE equations, the conditioning of implicit RK methods with positive semidefinite symmetric part of the coefficient matrix  is comparable to the implicit Euler integration (\cref{thm:irk:1}).

If the smallest eigenvalue of the symmetric part of the coefficient matrix is negative,
\cref{dt-0} gives an upper bound on possible $\Dt$ so that the system matrix $I_s\otimes M + \Dt{}\, \Gamma \otimes A$ is positive definite.
This condition implies $\Dt \le C \frac{\lambda_{\min}(B)}{\lambda_{\max}(A)}$ and can lead to a condition $\Delta t = \mathcal{O}(h^2)$.%

For the successive solution procedure, the conditioning of the system matrix $I_s\otimes M + \Dt{}\, \Gamma \otimes A$ of an implicit RK integration is determined by two types of smaller matrices (\cref{thm:irk:2})
\[
   M + \mu_j \Dt{}\, A
   \quad \text{and} \quad
   I_2\otimes M + \Dt{}\, \begin{bmatrix} \alpha_j & \beta_j \\ - \beta_j & \alpha_j \end{bmatrix} \otimes A
   .
\]
The first matrix is similar to the implicit Euler method and corresponds to the real eigenvalues $\mu_j$ of the RK matrix  $\Gamma$ while the second corresponds to the complex eigenvalues $\alpha_j \pm i \beta_j$ of $\Gamma$.

There are three mesh-dependent factors that affect the conditioning of general implicit RK methods: the number of mesh elements (average mesh size), the mesh nonuniformity (in the Euclidean metric), and the mesh nonuniformity with respect to the inverse of the diffusion matrix.
Preconditioning by the diagonal part of the system matrix itself or the mass matrix or by the lumped mass matrix reduces the effects of the mesh nonuniformity in the Euclidean metric (\cref{thm:euler:2,thm:euler:3}).
These results are consistent with previous studies for the boundary value problems~\cite{AinMcLTra99, BanSco89, GraMcL06, KamHuaXu14} and explicit integration of linear diffusion problems~\cite{HuaKamLan15,HuaKamLan16}.

Similar result can be shown for higher order FE approximations.
The major result is based on the preliminary estimates for the mass and stiffness matrices (\cref{lem:M:1,lem:M:2,lem:A:1,lem:A:1:1,lem:A:3}) which, in their essential part, hold for higher order as well (with different constants depending of the order).
For the FE, a symmetric positive definite (possibly lumped) mass matrix is equivalent to a diagonal matrix of node patch volumes~\cite[Lemmas~2.4]{HuaKamLan15} so that estimates equivalent to \cref{lem:M:1,lem:M:2} hold for higher order as well (with different constants).
For the stiffness matrix, estimates equivalent to \cref{lem:A:1,lem:A:1:1} are given in~\cite[Lemmas~2.2 and 2.3]{HuaKamLan15} and the derivation of the bound on $\lambda_{\min}(A)$ in \cref{lem:A:3} does not depend on the FE order (up to a constant) and therefore holds also for higher order FE.

%*** Bibliography **************************************************************

\end{document}